\newtheorem{theorem}{Theorem}[section]
\newtheorem{lemma}[theorem]{Lemma}
\newtheorem{proposition}[theorem]{Proposition}
\newtheorem{remark}{Remark}[section]
\def\eq#1{(\ref{#1})}
\def\nn{\nonumber}
\def\({\left(\begin{array}{cccccc}}
\def\){\end{array}\right)}
\def\eq#1{(\ref{#1})}
\def\nn{\nonumber}
\def\({\left(\begin{array}{cccccc}}
\def\){\end{array}\right)}
\def\bes{\begin{eqnarray}}
\def\ees{\end{eqnarray}}
\newcommand{\del}{\partial}
\newcommand{\beq}{\begin{equation}}
\newcommand{\eeq}{\end{equation}}
\newcommand{\bea}{\begin{eqnarray}}
\newcommand{\eea}{\end{eqnarray}}
\newcommand{\beann}{\begin{eqnarray*}}
\newcommand{\eeann}{\end{eqnarray*}}
\newcommand{\lam}{\ensuremath{\lambda}}
\newcommand{\RR}{\mathbb{R}}
\newcommand{\s}{\ensuremath{\mathrm{s}}}
\DeclareMathOperator{\grad}{grad}
\DeclareMathOperator{\dv}{div}
\numberwithin{equation}{section}
\begin{document}

\title{New Self-similar Euler Flows: gradient catastrophe without shock formation}

\begin{abstract}
	We consider self-similar solutions to the full compressible Euler system for an ideal gas in 	
	two and three space dimensions. The system admits a 
	2-parameter family of similarity solutions depending on parameters $\lambda$ and $\kappa$. 
	Requiring locally finite amounts of mass, momentum, and energy
	imply certain constraints on $\lambda$ and $\kappa$. Further constraints are imposed for 
	particular types of flows. E.g., Guderley's pioneering construction of an unbounded converging shock 
	wave invading a quiescent fluid, requires $\kappa=0$ and $\lambda>1$.
	
	In this work we 	analyze the regime $0<\lambda<1$, which does not appear to have been
	addressed previously. Our findings include: (i) non-existence of Guderley shock solutions; 
	(ii) existence of bounded and continuous incoming similarity flows in 3-d provided $\kappa$ takes 
	the value $\hat\kappa=\frac{2(1-\lambda)}{\gamma-1}$, $\lambda$ is sufficiently small, and $\gamma$ 
	is sufficiently large; (iii) continuation of the latter flows
	beyond collapse as globally defined and continuous similarity solutions. 
	
	A key feature of these solutions is that they, in contrast to Guderley solutions,
	remain bounded at time of collapse, while the density, velocity, and sound speed
	all suffer gradient blowup. It is noteworthy that, notwithstanding infinite gradients at collapse, 
	no shock wave appears. 
%	Finally, this behavior is possible even in cases where 
%	all fluid particles move toward the center. 
	The analysis is based on a combination of analytical and numerical calculations.
\end{abstract}

\author{Helge Kristian Jenssen }\address{H.~K.~Jenssen, Department of
Mathematics, Penn State University,
University Park, State College, PA 16802, USA ({\tt
jenssen@math.psu.edu}).}

\author{Alexander Anthony Johnson}\address{A.~A.~Johnson, Department of
Mathematics, Penn State University,
University Park, State College, PA 16802, USA ({\tt
axj175@psu.edu}).}
\date{\today}
\maketitle

\noindent
{\bf Key words.} Compressible fluid flow, multi-d Euler system, radial symmetry, similarity solutions, singularity 
formation\\

\noindent
{\bf AMS subject classifications.} 35L45, 35L67, 76N10, 35Q31

\tableofcontents

%%%%%%%%%%%%%%%%%%%%%%%%%%%%%%%%%%%%%%%%%%%%%%%%
%%%%%%%%%%%%%%%%%%%%%%%%%%%%%%%%%%%%%%%%%%%%%%%%
\section{Introduction}\label{intro}
%%%%%%%%%%%%%%%%%%%%%%%%%%%%%%%%%%%%%%%%%%%%
The non-isentropic (full) compressible Euler system expresses conservation of 
mass, momentum, and energy in fluid flow in the absence of second order effects: 
\begin{align}
	\rho_t+\dv_{\bf x}(\rho \bf u)&=0 \label{mass_m_d_full_eul}\\
	(\rho{\bf  u})_t+\dv_{\bf x}[\rho {\bf  u}\otimes{\bf  u}]+\grad_{\bf  x} p&=0
	\label{mom_m_d_full_eul}\\
	(\rho E)_t+\dv_{\bf x}[(\rho E+p){\bf  u}]&=0.\label{energy_m_d_full_eul}
\end{align}
The independent variables are time $t$ and position ${\bf  x}\in\RR^n$, and the 
primary dependent variables are density $\rho$, fluid velocity ${\bf  u}$, 
and internal energy $e$; the total energy density is $E=e+\textstyle\frac{1}{2}|{\bf  u}|^2$. 
We restrict attention to ideal gases with pressure $p$ given by
\beq\label{pressure1}
	p(\rho,e)=(\gamma-1)\rho e.
\eeq
Throughout, the adiabatic constant $\gamma$ is assumed to satisfy $\gamma>1$.
The local speed of sound $c$ is
\beq\label{sound_speed}
	c=\sqrt{\textstyle\frac{\gamma p}{\rho}}=\sqrt{\gamma(\gamma-1)e}.
\eeq

We consider radial flows in space dimension $n=2$ or $n=3$, 
i.e., the flow variables depend on
position only through the distance $r=|{\bf x}|$ to the origin, and the 
velocity field is purely radial, viz.\ ${\bf u}=u\frac{\bf x}{r}$. 
With these assumptions, and within smooth regions of the flow, 
\eq{mass_m_d_full_eul}-\eq{energy_m_d_full_eul} read
\begin{align}
	\rho_t+u\rho_r+\rho(u_r+\textstyle\frac{m}{r}u) &= 0\label{m_eul}\\
	u_t+ uu_r +\textstyle\frac{1}{\gamma\rho}(\rho c^2)_r&= 0\label{mom_eul}\\
	c_t+uc_r+{\textstyle\frac{\gamma-1}{2}}c(u_r+\textstyle\frac{m}{r}u)&=0,\label{ener_eul}
\end{align}
where $\rho=\rho(t,r)$, $u=u(t,r)$, $c=c(t,r)$, and $m=n-1$.

%%%%%%%%%%%%%%%%%%%%%%%%%%%%%%%%%%%
%%%%%%%%%%%%%%%%%%%%%%%%%%%%%%%%%%%
\subsection{Self similar Euler flows}\label{ss_flows}
%%%%%%%%%%%%%%%%%%%%%%%%%%%%%%%%%%%
%%%%%%%%%%%%%%%%%%%%%%%%%%%%%%%%%%%
We next specialize further by imposing self similarity \cites{gud,cf,sed,stan}. 
For this we follow \cites{laz,cf} and introduce the similarity variables
\beq\label{alt_sim_vars}
	x=\frac{t}{r^\lambda},\qquad \rho(t,r)=r^\kappa R(x),\qquad 
	u(t,r)=-\frac{r^{1-\lambda}}{\lambda}\frac{V(x)}{x}, \qquad 
	c(t,r)=-\frac{r^{1-\lambda}}{\lambda}\frac{C(x)}{x}.
\eeq
At this stage the similarity parameters $\lambda$ and $\kappa$ are free.

Substitution of \eq{alt_sim_vars} into \eq{m_eul}-\eq{mom_eul} yield three coupled
ODEs for $R(x)$, $V(x)$, $C(x)$. The density variable $R$ can be
eliminated to give two coupled ODEs for only $V$ and $C$, viz.
\begin{align}
	\frac{dV}{dx}&=-\frac{1}{\lambda x}\frac{G(V,C)}{D(V,C)}\label{V_sim2}\\
	\frac{dC}{dx}&=-\frac{1}{\lambda x}\frac{F(V,C)}{D(V,C)},\label{C_sim2}
\end{align}
which in turn yield a single, autonomous ODE 
\beq\label{CV_ode}
	\frac{dC}{dV}=\frac{F(V,C)}{G(V,C)}
\eeq
relating $V$ and $C$ along self-similar solutions. The functions $D$, $G$, $F$ are given by
\begin{align}
	D(V,C)&=(1+V)^2-C^2\label{D}\\
	G(V,C)&=nC^2(V-V_*)-V(1+V)(\lam+V)\label{G}\\
	F(V,C)&=C\left\{C^2\big(1+\textstyle\frac{\alpha}{1+V}\big)
	-k_1(1+V)^2+k_2(1+V)-k_3\right\},\label{F}
\end{align}
where
\beq\label{V_*}
	V_*=\textstyle\frac{\kappa-2(\lambda-1)}{n\gamma},
\eeq
\beq\label{alpha}
	\alpha=\textstyle\frac{1}{2\gamma}[\kappa(\gamma-1)+2(\lambda-1)],
\eeq
and
\beq\label{ks}
	k_1=1+{\textstyle\frac{(n-1)(\gamma-1)}{2}},\qquad 
	k_2={\textstyle\frac{(n-1)(\gamma-1)+(\gamma-3)(\lam-1)}{2}},\qquad
	k_3=\textstyle\frac{(\gamma-1)(\lam-1)}{2}.
\eeq

Evidently, the construction of radial self-similar Euler flows requires an analysis
of the phase portrait for \eq{CV_ode} in the $(V,C)$-plane. However, since the ODEs
\eq{V_sim2}-\eq{C_sim2} are singular along the two critical lines defined by $D(V,C)=0$, 
only certain trajectories of \eq{CV_ode} yield physically meaningful flows. Specifically, 
any trajectory crossing a critical line can do so only at points where all three 
of $F$, $G$, and $D$ vanish. 

Having identified an admissible trajectory $\Gamma$ of \eq{CV_ode}
connecting some of its equilibria, it may be used to generate a solution to the 
system \eq{V_sim2}-\eq{C_sim2}. Finally, it must be checked that the resulting 
solution $(V(x),C(x))$ moves along $\Gamma$ in the correct manner as $x$ increases 
from $-\infty$ to $+\infty$. In particular, this is an issue for the solutions we construct.
These pass through the origin in the $(V,C)$-plane for $x=0$ and it must be verified 
that the signs of $F$, $G$, $D$, and $x$ match up correctly at the crossing. E.g., 
the physical requirement that sound speed is non-negative implies that the 
solution passes from $\{C>0\}$ to $\{C<0\}$ as $x$ increases from negative to positive values.

The analysis of \eq{CV_ode} involves
a fair amount of calculations as the number, locations, and types of its equilibria 
depend on the parameters $n$, $\gamma$, $\lambda$, and $\kappa$. 
In Section \ref{crit_pts} we record the explicit expressions for the equilibria, valid for any choice
of parameters. Since we have not found it in the existing literature, we also provide a complete 
breakdown of when the various equilibria of \eq{CV_ode} are present.

Once a solution to \eq{V_sim2}-\eq{C_sim2} has been selected, the velocity and sound 
speed in the corresponding Euler flow are determined via \eq{alt_sim_vars}. 
The full description of the flow requires also the density 
field $\rho(t,r)=r^\kappa R(x)$. This can be obtained within each region of continuous 
flow from the following explicit {\em entropy integral}
\beq\label{entr_int}
		\textstyle\left(\frac{C(x)}{x}\right)^2\!R(x)^{1-\gamma}[R(x)|1+V(x)|]^q\equiv \text{constant $>0$},
\eeq  
with $R\geq 0$ and  
\beq\label{q}
		q=\textstyle\frac{1}{\kappa+n}[\kappa(\gamma-1)+2(\lambda-1)]
		= \frac{2\gamma}{\kappa+n}\alpha.
\eeq 
The existence of this integral is a consequence of the fact that the specific entropy remains 
constant along particle trajectories in continuous Euler flow.

For later reference we note that the choice
\beq\label{isentr_kappa}
	\kappa=\hat\kappa:=\textstyle\frac{2(1-\lambda)}{\gamma-1},
\eeq
makes $\alpha$, and hence $q$ vanish.
The entropy integral \eq{entr_int} then reduces to
\beq\label{CR}
	(\textstyle\frac{C(x)}{x})^2R(x)^{1-\gamma}\equiv \text{constant $>0$.}
\eeq
In terms of temperature $\theta\propto c^2$ and density $\rho$, this amounts to
$\theta\rho^{1-\gamma}$ being constant, i.e., the specific entropy 
takes a constant value throughout any region of continuity. 
Thus, continuous similarity flows with $\kappa=\hat\kappa$ provide
isentropic solutions to the Euler system.
The solutions we build in Section \ref{cont_constrcn} are of this type.

%%%%%%%%%%%%%%%%%%%%%%%
\subsection{Outline and main results}\label{main_findings}
%%%%%%%%%%%%%%%%%%%%%%%
The present work addresses a particular type of converging-diverging flows 
with $\lambda\in(0,1)$,
in which an incoming radially symmetric wave collapses on the center of motion
and reflects an outgoing wave. Without loss of generality, the time of collapse 
is chosen as $t=0$, a choice which is built into the definition of the similarity 
variable $x$ in \eq{alt_sim_vars}.

Before describing our findings we briefly review some earlier results.
The pioneering study \cite{gud} of Guderley provided examples
of unbounded converging-diverging shock waves in an ideal gas. 
An incoming spherical shock wave approaches the origin by invading a 
quiescent fluid (homogeneous and at rest),
while gaining strength. At collapse it has infinite speed and the 
velocity, sound speed, and pressure in its immediate wake are unbounded. The subsequent flow 
accommodates the infinite amplitudes at the center of motion by generating 
an expanding shock wave, which then slows down and weakens as it interacts with the 
still-incoming flow ahead of it. 

In what follows, solutions in which a converging shock invades a quiescent fluid, 
collapses at the origin, and then generates an expanding shock wave, will be 
referred to as {\em Guderley solutions}. Their construction depends on resolving a
nonlinear eigenvalue problem for the similarity parameter $\lambda$ (see 
Section \ref{no_Guderley}). It turns out that, in a Guderley solution,
the similarity parameter $\kappa$ must necessarily be zero and that the temperature 
in the quiescent part of the fluid vanishes identically. 
The allowed $\lambda$ values depend on both the geometry ($n$) and the gas
($\gamma$), and are dictated by the requirement that a certain ODE-trajectory pass
through a particular equilibrium of \eq{CV_ode}. Their determination must be done numerically,
a task that has been carried out to considerable accuracy in a number of works 
(for $n=2$ or $3$ and various $\gamma>1$); see \cites{rkb_12,gud,bk,laz,am,haf,hg}
and references therein.

%%%%%%%%%%%%%%%%%%%%%%%%
\begin{remark}
	In the applied literature on self-similar Euler flows the emphasis 
	has been on Guderley solutions due to their relevance to inertial fusion 
	research, \cites{am,dm,p,mtv_s}.  The closely related construction of unbounded 
	self-similar cavity flows has been analyzed in \cites{bk,laz,hun_63}. 
\end{remark}
%%%%%%%%%%%%%%%%%%%%%%%%

In all works on self-similar solutions to the Euler system that we are aware of, attention is restricted to 
similarity parameters $\lambda>1$ or, as a limiting case, $\lambda=1$\footnote{We 
note that $\lambda=1$ provides the setting for the study of multi-d Riemann 
problems, \cite{zheng}.}. Our first objective in this work is to consider the possibility  
of Guderley shock solutions when $\lambda\in(0,1)$. Since a shock in a similarity flow 
propagates along a path with 
$x=\frac{t}{r^\lambda}\equiv constant$, $\lambda\in(0,1)$ would yield a ``glancing'' 
shock wave that weakens and slows down, reaching the center of motion with vanishing 
speed. However, as described in Section \ref{no_Guderley}, it does not appear 
possible to generate a Guderley solution when $\lambda\in(0,1)$: the relevant ODE-trajectories
simply do not reach the required equilibrium.  

We then turn to the possibility of constructing {\em continuous} self-similar
radial Euler flows. For $\lambda>1$ such solutions have recently been  
constructed, up to time of collapse, in the works \cites{jt1,jt2,mrrs1}. 
These solutions suffer amplitude blowup at the $t=0$ and are propagated to positive 
times in \cites{jt1,jt2} by having a shock emerge from the center of motion, similar to 
what occurs in Guderley solutions. 
%%%%%%%%%%%%%%%%%%%%%%%%%%%%%%%%%%%%%%%%%%%%%%%%
\begin{remark}
	The work \cite{mrrs1} addresses the subtle issue of constructing {\em smooth} ($C^\infty$) self-similar
	isentropic flows (up to collapse). The recent work \cite{biasi} provides numerical 
	evidence that these solutions are linearly unstable with respect to 1-d radial perturbations.
	
	We note that the continuous 
	solutions considered in \cites{jt1,jt2,mrrs1} demonstrate in particular that amplitude blowup does not 
	require a central region of vanishing pressure, as is the case in Guderley solutions and 
	cavity flows.
\end{remark}
%%%%%%%%%%%%%%%%%%%%%%%%%%%%%%%%%%%%%%%%%%%%%%%%

The main contribution of the present work is the construction and analysis of {\em globally} 
continuous radial self-similar flows for the full Euler system with similarity parameter $\lambda\in(0,1)$. 
This parameter range yields very different behavior compared to those 
of Guderley solutions, or those in \cites{jt1,jt2,mrrs1}: instead of suffering amplitude 
blowup, the primary flow variables $\rho$, $u$, $c$ remain bounded near the center of motion,
and instead suffer {\em gradient catastrophes} at time of collapse $t=0$. 
However, notwithstanding the infinite gradients, 
the solutions propagate as {\em continuous} flows to positive times. We find it noteworthy 
that this can occur even in cases where all fluid particles move
{\em toward} the origin at time $t=0$. 

The issue of shock formation and propagation in multi-d Euler flows has recently been analyzed in great detail,
providing fundamental new results in the field, see \cites{christ_1,christ_2,ls_1,ls_2,bi,bsv_1,bdsv} 
and references therein. In this connection, the solutions we obtain here simply point out that
singularity formation (i.e., some of the primary flow variables suffer a gradient catastrophe), 
does not necessarily give rise to a shock wave; for further detail see Remark \ref{no_shock}.   

%%%%%%%%%%%%%%%%%%%%%%%%%%%%%%%%%%%%%%%%%%%%%%%%
\begin{remark}
	We have not addressed the stability of the solutions we obtain. However, we note
	that their pressure fields do not suffer gradient blowup.
%	 providing a possible physical 
%	explanation for the absence shock generation. (Recall that the only forces in the 
%	Euler model are pressure forces.)  
	In fact, at time of collapse the pressure 
	vanishes super-linearly as $r\downarrow0$ (see Section \ref{r=0_behavior}), 
	which might provide a stabilizing effect.
\end{remark}
%%%%%%%%%%%%%%%%%%%%%%%%%%%%%%%%%%%%%%%%%%%%%%%%

The construction of globally continuous self-similar flows with $\lambda\in(0,1)$ follows the 
standard strategy of building solutions from trajectories of the  
ODE \eq{CV_ode} connecting some of its equilibria. However, the requirements of  
continuity and $0<\lambda<1$ impose additional constraints. First, as in \cite{jt2}, 
we show that continuity of the flow
(specifically, boundedness of $\rho$ and $c$ at the center of motion prior to collapse)
requires the similarity parameter $\kappa$ to take the ``isentropic'' value $\kappa=\hat \kappa$ in \eq{isentr_kappa}.
As noted above, this choice renders the flow globally isentropic. We verify that it also guarantees the
absence of a gradient catastrophe prior to $t=0$ (Section \ref{r=0_behavior}).

In addition, to guarantee the existence of suitable trajectories when $\lambda\in(0,1)$,
further restrictions must be imposed. 
These are dictated by the requirement that a certain critical point ($P_8$ in what follows) be 
a proper node with a suitable primary direction.
It turns out that this requires the space dimension to be $3$, and that $\lambda$ belongs to the 
restricted range $(0,\frac{1}{9})$.
Finally, the adiabatic constant needs to be sufficiently large, viz.\ $\gamma>\gamma_3(\lambda)$, where the latter 
is an increasing function satisfying 
\[\lim_{\lambda\downarrow 0}\gamma_3(\lambda)=\gamma_*\approx 8.72,\qquad 
\lim_{\lambda\uparrow \frac 1 9}\gamma_3(\lambda)=+\infty;\]
see Figure \ref{gamma_3}.
With these assumptions met, we verify numerically the existence of suitable ODE trajectories.
%The concrete case $n=3$, $\lambda=0.02$, and $\gamma=12$ is displayed in Section \ref{Gamma_num_verfcn}.
%As far as we are aware, to date all similarity solutions of the compressible Euler system 
%constructed in the literature concern flows in which the flow velocity $u$ and the sound 
%speed $c$ suffer amplitude blowup at time of collapse. Now, in order that solutions
%of the form \eq{alt_sim_vars} define 
%physically meaningful flows at time of collapse $t=0$ (which corresponds to $x=0$), 
%the corresponding solutions $(V(x),C(x))$ of \eq{CV_ode} must be such that $\frac{V(x)}{x}$ 
%and $\frac{C(x)}{x}$ approach finite limits as $x\to0$. If so, then \eq{alt_sim_vars} shows
%that $u$ and $c$ will suffer blowup in amplitude at $t=0$ if and only if the similarity parameter
%$\lambda$ is strictly larger than unity: $\lambda>1$. 
%
%In contrast, our goal in this work is to investigate the regime $0<\lambda<1$. 
%Briefly, our findings are as follows: 
Our main findings are as follows:

\medskip

%%%%%%%%%%%%%%%%%%%%%%%%%%%%%%%%%%%%%%%%
\noindent{\bf Main Results.} {\em Consider radial self-similar solutions of the form \eq{alt_sim_vars} to 
the full multi-d Euler system \eq{mass_m_d_full_eul}-\eq{energy_m_d_full_eul} in space 
dimension $2$ or $3$, with similarity variables $\lambda$ and $\kappa$. Then:
\begin{enumerate}
	\item No Guderley solutions (converging shock invading a quiescent state) appear possible
	when $\lambda\in(0,1)$.
	\item The existence of continuous self-similar solutions requires that $\kappa$ 
	takes the ``isentropic'' value $\hat\kappa$ in \eq{isentr_kappa}; in turn, this 
	choice renders the flow globally isentropic and without singularities (gradient 
	catastrophes) prior to collapse.
	\item With $\kappa=\hat\kappa$ there is a 1-parameter family of continuous 
	self-similar solutions \eq{alt_sim_vars} which describe a  converging 
	wave collapsing at the origin at time $t=0$. Our construction of this type of solution requires
	$n=3$, $\lambda\in(0,\frac 1 9)$, and sufficiently large values of $\gamma$, viz.\ $\gamma>\gamma_3(\lambda)$. 
	\item The solutions described in {\em (3)}, while locally bounded, are such that $\rho$, $u$, $c$ 
	all suffer gradient catastrophes at the origin at time of collapse.
	The pressure is $C^1$-smooth and vanishes
	super-linearly as the center of motion is approached at time $t=0$.
	\item Notwithstanding infinite gradients in $\rho$, $u$, and $c$ at collapse, we provide examples 
	of solutions that extend as continuous similarity solutions to positive times. 
	Numerical evidence suggests that no outgoing shock is generated whenever
	$\lambda$ and $\gamma$ are as described in {\em (3)}. 
\end{enumerate}}
%%%%%%%%%%%%%%%%%%%%%%%%%%%%%%%%%%%%%%%%

\medskip

Two remarks are in order.
%%%%%%%%%%%%%%%%%%%%%%%%%%%%%%%%%%%%%%%%
\begin{remark}
	The solutions described in parts {\em (3)-(5)} have locally bounded
	mass, momentum, and energy. On the other hand, it is readily verified that they 
	are unbounded as $r\to\infty$ at any fixed time $\bar t$; specifically,
            \beq\label{uc_behav}
            	u(\bar t,r)\sim r^{1-\lambda},\, c(\bar t,r)\sim r^{1-\lambda}
		\quad\text{and}\quad \rho(\bar t,r)\sim r^{\hat\kappa}
		\qquad\text{as $r\uparrow\infty$.}
            \eeq
        Since $\lambda<1$, the solutions have infinite total mass, momentum, 
        and energy. However, it appears reasonable that the same local behavior near
        the center of motion can be obtained in solutions with bounded mass, 
        momentum, and energy. This could be achieved by fixing a time $t_0<0$ 
        and modifying the self-similar solution outside of a sufficiently large ball $B_{R_0}(0)$. 
        Specifically, $R_0$ should be larger than the radial position $r_c(t_0)$, where $r_c(t)$ 
        denotes the critical 1-characteristic (sonic curve) passing through the origin 
        at $t=0$. This would ensure that the modified part of the solution remain causally 
        independent of the flow near $r=0$, provided the modified solution remains continuous 
        up to time $t=0$. 
        It is reasonable that this scenario can be achieved (e.g., by having 
        the modification at time $t_0$ generate a suitable expanding 
        rarefaction wave), but we stress that we do not 
        have a rigorous proof of this.
\end{remark}
%%%%%%%%%%%%%%%%%%%%%%%%%%%%%%%%%%%%%%%%
%%%%%%%%%%%%%%%%%%%%%%%%%%%%%%%%%%%%%%%%
\begin{remark}\label{no_shock}
	Concerning the absence of shocks, it is of interest to consider the behavior 
	of 1-characteristics near the center of 
	motion in the  continuous solutions described above. For this, fix a time $t_0<0$
	and let $r(t,\xi)$ denote the 1-characteristic that passes through location $r=\xi$
	at time $t_0$, i.e., 
	\beq\label{1_char}
		\del_t r(t,\xi)=(u-c)|_{(t,r(t,\xi))},\qquad r(t_0,\xi)=\xi.
	\eeq
	The critical 1-characteristic
	(sonic line) which arrives at the origin at time of collapse, propagates along 
	the path $r_c(t)=|\frac{t}{x_8}|^\frac{1}{\lambda}$ for $t<0$, where $x_8$ 
	is the $x$-value for which the incoming  solution passes through the particular critical 
	point $P_8\in\{D=0\}\cap\{F=0\}\cap\{G=0\}$ (cf. \eq{D}-\eq{G}\eq{F}). We are interested in the density of 1-characteristics 
	at the center of motion at time $t=0$. We therefore set
	\[\mu(t,\xi):=\del_\xi r(t,\xi),\]
	and seek to compute $\mu(0,\xi_c)$, where $\xi_c=r_c(t_0)$.
	Shock formation is expected when the characteristics concentrate,
	i.e., $\mu(0,\xi_c)=0$.
	Differentiating \eq{1_char} with respect to $\xi$ yields 
	\[\del_t\log \mu(t,\xi)=(u_r-c_r)|_{(t,r(t,\xi))}
	=\textstyle\frac{1}{\lambda t}[\lambda x(V'(x)-C'(x))+(C(x)-V(x))].\]
	For $\xi=\xi_c$, which corresponds to $x=x_8$, this gives
	\[\del_t\log \mu(t,\xi_c)=\textstyle\frac{A_8}{\lambda t},\]
	where the constant $A_8$ is given by 
	\[A_8=\lambda x_8(V'(x_8)-C'(x_8))+1.\]
	$A_8$ is explicitly available and is given in terms of the first partials of $F(V,C)$ and $G(V,C)$ at $P_8$, and 
	is a (somewhat complicated) function of $\lambda$ and $\gamma$.
	Integrating from time $t_0$ to $t<0$, and using $\mu(t_0,\xi)\equiv 1$, we have
	\[\mu(t,\xi_c)=\big|\textstyle\frac{t}{t_0}\big|^\frac{A_8}{\lambda}.\]
	It follows that shock formation at the center of motion at time $t=0$ would require $A_8>0$.
	However, a numerical evaluation reveals that $A_8<0$ whenever 
	the parameters are as described in part (3) of the Main Results (i.e., $n=3$, $\lambda\in(0,\frac 1 9)$,
	and $\gamma>\gamma_3(\lambda)$). This provides an 
	analytic verification of the absence of shocks in the constructed self-similar flows.
	An alternative, graphic verification based on the Rankine-Hugoniot relations is described in 
	Section \ref{no_shock_graph}.
\end{remark}
%%%%%%%%%%%%%%%%%%%%%%%%%%%%%%%%%%%%%%%%
The rest of the article is organized as follows. In Section \ref{crit_pts} we record the 
equilibria of \eq{V_sim2}-\eq{C_sim2}. There are up to 11 of these and we provide a complete breakdown of their  
presence depending on the parameters $n$, $\gamma$, $\lambda$, and $\kappa$. 
The cases when $\kappa=0$ and $\kappa=\hat\kappa$ are treated separately for later use. 
Section \ref{no_Guderley} describes
Guderley solutions and argues that no such solution appears possible
when $\lambda\in(0,1)$. Turning to the construction of continuous similarity flows 
for this $\lambda$ range, we make use of the singular points at infinity ($P_{\pm\infty}$) and at the origin ($P_1$). 
For the resulting flows we then analyze the restrictions placed on $\lambda$ and $\kappa$ 
by integrability and continuity constraints. These are dealt with in Section \ref{lam_kap_constrs}
where it is found that the latter constraint fixes $\kappa=\hat\kappa$. 
We also verify that no gradient catastrophe occurs prior to collapse in the resulting flows.

The construction of the relevant trajectories 
is detailed in Section \ref{cont_constrcn}. For this we want that one of the critical 
points, $P_8$, is a proper node, guaranteeing that an infinite number of trajectories are drawn to it. This requires 
a detailed analysis of various quantities defined in terms of the partial derivatives of $F$ and $G$ at $P_8$. We 
then show how the requirement that the saddle point $P_{+\infty}$ be joined to the node at $P_8$ via
a trajectory $\Gamma_1$ of  \eq{V_sim2}-\eq{C_sim2} imposes the additional constraints $n=3$, $\lambda\in(0,\frac 1 9)$,
and $\gamma>\gamma_3(\lambda)$ (Sections \ref{Gamma_1}-\ref{P_8_analysis}). 
We next describe how to select suitable trajectories $\Gamma_2$ joining $P_8$ to its reflection $P_9$ about the $V$-axis
(Section \ref{P_1_analysis}). 
Such trajectories must pass through the proper node $P_1$ at the origin; there is typically an infinite number of such solutions.
Finally, we add the reflection $\Gamma_3$ of $\Gamma_1$ about the $V$-axis to define the 
complete solution trajectory $\Gamma:=\Gamma_1\cup\Gamma_2\cup\Gamma_3$ of  \eq{V_sim2}-\eq{C_sim2}.
The corresponding flow variables defined via \eq{alt_sim_vars} and \eq{CR} then provide global, $3$-dimensional, 
self-similar, and continuous Euler flows. Section \ref{sum} summarizes the  construction 
and the required numeric tests, which are done in Section \ref{Gamma_num_verfcn}.

To illustrate the construction in Section \ref{cont_constrcn}, we provide figures
displaying the various trajectories for the particular case $n=3$, $\lambda=0.02$, $\gamma=12$. 
We finally illustrate graphically the absence of a shock wave in the flow 
after collapse in this case (Section \ref{no_shock_graph}). 
%(This amounts to the fact that the ``Hugoniot 
%locus'' corresponding to $\Gamma_2$ fails to intersect $\Gamma_3$ before reaching $P_9$,
%see Figure \ref{Zoom_Hugoniot}.) 

%%%%%%%%%%%%%%%%%%%%%%%%%%%%%%%%%%%%
%%%%%%%%%%%%%%%%%%%%%%%%%%%%%%%%%%%%
\section{Critical points}\label{crit_pts}
%%%%%%%%%%%%%%%%%%%%%%%%%%%%%%%%%%%%
%%%%%%%%%%%%%%%%%%%%%%%%%%%%%%%%%%%%
Throughout this section $n=2$ or $3$, and $\gamma>1$. 
The goal is to identify the critical points of \eq{CV_ode} and to determine how 
their presence depends on the parameters $\kappa$ 
and $\lambda$, which are unrestricted for now (until Section \ref{isntr_sing_pnts}).
As we have not found it in the existing literature, we provide a complete 
breakdown of all the cases.
%%%%%%%%%%%%%%%%%%%%%%%%
\begin{remark}
	The presence of some of the critical points (viz.\ $P_6$-$P_9$ in the 
	notation introduced below) places certain constraints
	on the parameters $\lambda$, $\kappa$, $\gamma$, and $n$; 
	see Section \ref{P4_P9}. Further requirements 
	are imposed in Section \ref{lam_kap_constrs}.
\end{remark}
%%%%%%%%%%%%%%%%%%%%%%%%
We introduce the {\em critical lines}
\[\mathcal L_\pm:=\{(V,C)\,|\,C=\pm(1+V)\},\]
and note the relation
\beq\label{non_obvious_reln}
	F(V,\pm(1+V))\equiv \mp\textstyle\frac{(\gamma-1)}{2}G(V,\pm(1+V)).
\eeq
The critical points of \eq{CV_ode} are the points of intersection between
the zero-level sets 
\[\mathcal F:=\{(V,C)\,:\, F(V,C)=0\}\qquad\text{and}\qquad 
\mathcal G:=\{(V,C)\,:\, G(V,C)=0\}\]
of the functions $F$ and $G$ defined in \eq{F} and \eq{G}, respectively. 
Note that $V=V_*$ (see \eq{V_*}) is a vertical asymptote for $\mathcal G$.
The following symmetries will be important in assembling trajectories 
of \eq{V_sim2}-\eq{C_sim2},
\beq\label{symms}
	G(V,-C)=G(V,C),\qquad F(V,-C)=-F(V,C).
\eeq

It turns out that there are up to nine points of intersection between 
$\mathcal F$ and $\mathcal G$, 
and we follow \cite{laz} in numbering these $P_i=(V_i,C_i)$, $i=1,\dots,9$. 
In addition there are two critical points at infinity,
\beq\label{p+-}
	P_{\pm\infty}:=(V_*,\pm\infty),
\eeq
both of which are used in the construction of continuous Euler 
flows in Section \ref{cont_constrcn}.

%%%%%%%%%%%%%%%%%%%%%%%%%%
\subsection{Critical points $P_1$-$P_3$}\label{P1_P3}
%%%%%%%%%%%%%%%%%%%%%%%%%%
We begin by observing that there are always three critical points located 
along the $V$-axis:
\[P_1:=(0,0), \qquad P_2:=(-1,0), \qquad\text{and}\qquad P_3:=(-\lambda,0).\]  
Of these only $P_1$ is relevant for our purposes. The linearization of 
\eq{CV_ode} at $P_1$ is $\frac{dC}{dV}=\frac C V$ (for all values of $n$, 
$\gamma$, $\kappa$, and $\lambda$), showing that $P_1$ is a 
star point (proper node). Thus, for any straight line $\ell$ from the origin, there is a unique 
trajectory $(V,C(V))$ of \eq{CV_ode} which approaches the origin tangent to $\ell$.

Assume now that a solution $(V(x),C(x))$ of \eq{V_sim2}-\eq{C_sim2} approaches 
$P_1$ with slope $k$. The corresponding trajectory $(V,C(V))$ of \eq{CV_ode}
then satisfies $C(V)\approx kV$ for $V\approx 0$, and an inspection of 
\eq{V_sim2}-\eq{C_sim2} yields
\[\frac{dV}{dx}\approx \frac V x \qquad\text{and}\qquad \frac{dC}{dx}\approx \frac C x\]
as $P_1$ is approached. It follows from this that any solution of \eq{V_sim2}-\eq{C_sim2} 
reaching $P_1$ must do so for $x=0$, and also that the limits
\beq\label{well_bhvd}
	\nu:=\lim_{x\to0}{\textstyle\frac{V(x)}{x}}\qquad\text{and}\qquad
	\omega:=\lim_{x\to0}\textstyle\frac{C(x)}{x}\qquad\text{exist as finite numbers.}
\eeq
This last property is a minimal requirement 
for \eq{alt_sim_vars} to yield a meaningful flow at time $t=0$.
(In the limiting case that $P_1$ is reached with infinite slope, $\nu$ vanishes.)

Observe also that due to the requirement that $c(t,r)\geq 0$, we get from 
\eq{alt_sim_vars} that a solution $(V(x),C(x))$ of \eq{V_sim2}-\eq{C_sim2}
must necessarily pass from the upper half-plane $\{C>0\}$ to the lower half-plane
$\{C<0\}$ as $x$ increases from negative to positive values.

%%%%%%%%%%%%%
%%%%%%%%%%%%%
\subsection{Critical points $P_4$-$P_9$}\label{P4_P9}
%%%%%%%%%%%%%
%%%%%%%%%%%%%
The critical points $P_4$-$P_9$ are obtained 
by solving $G(V,C)=0$ for $C^2$ in terms of $V$, and substituting the result into
the equation $F(V,C)=0$; this yields a cubic polynomial in $V$ (see below). According to the 
symmetries in \eq{symms}, the critical points $P_4$-$P_9$ come in pairs
located symmetrically about the $V$-axis. The ones located above (below) the $V$-axis 
are $P_4$ ($P_5$), $P_6$ ($P_7$), and $P_8$ ($P_9$). It turns out that among
these, $P_4$ and $P_5$ are present for all values of $n$, $\kappa$, $\lambda$, and
$\gamma$, while $P_6$-$P_9$ may or may not be present.

Restricting attention to $P_4$, $P_6$, and $P_8$, we proceed to determine when 
and where these occur. From $G(V,C)=0$ we have 
\beq\label{C^2_G}
	C^2=\textstyle\frac{V(1+V)(\lambda+V)}{n(V-V_*)}.
\eeq
%where 
%\beq\label{V_*}
%	V_*:=\textstyle\frac{\kappa-2(\lambda-1)}{n\gamma}
%\eeq
%gives the location of a vertical asymptote for the zero-level set of $G$.
Substituting \eq{C^2_G} into $F(V,C)=0$, and recalling that we now seek 
critical points off the $V$-axis, give the following 
cubic equation for $W:= 1+V$:
\begin{align*}
	&[nk_1-1] W^3 - [nk_2-\beta k_1+\alpha+(\lambda-2)] W^2\\
	&\qquad\qquad+[nk_3-\beta k_2-(\lambda-2)\alpha+(\lambda-1)]W+[\beta k_3+(\lambda-1)\alpha]=0,
\end{align*}
where $\alpha$ and the $k_i$ are given in \eq{alpha} and \eq{ks}, and $\beta=-n(1+V_*)$.
This cubic always has one real root, denoted $W_4$,
and two possibly complex roots $W_6$ and $W_8$. The root $W_4=1+V_4$ is given by
\beq\label{V_4}
	V_4=-\textstyle\frac{\lambda}{1+\frac{n}{2}(\gamma-1)},
\eeq
(cf.\ Eqn.\ (3.3) in \cite{laz}).
We note that $V_4$ is independent of $\kappa$; however, the corresponding $C$-value $C_4>0$,
given by \eq{C^2_G}, does depend on $\kappa$ through $V_*$.
The two remaining roots $V_6\equiv V_-$ and $V_8\equiv V_+$ are given by
\begin{align}
	&V_\pm=\textstyle\frac {1}{2 m \gamma} 
	\Big[(\gamma-2)\mu +\kappa-m\gamma\pm\sqrt{(\gamma-2)^2\mu^2
	-2[\gamma m(\gamma+2)-\kappa(\gamma-2)]\mu+(\gamma m+\kappa)^2}\Big],\label{V_pm}
\end{align}
where we have set
\beq\label{mmu}
	m:=n-1\qquad\text{and}\qquad\mu:=\lambda-1.
\eeq
(In what follows we use either $\lambda$ or $\mu$, always assuming $\mu=\lambda-1$.)
The values $V_\pm$,  when real, yield the critical points $P_6=(V_6,C_6)$ 
and $P_8=(V_8,C_8)$ above the $V$-axis via \eq{C^2_G}. 
We record the non-obvious fact that
\beq\label{on_L_+}
	C_i^2=(1+V_i)^2\qquad\text{for $i=6,8$.}
\eeq
Therefore, whenever $P_6$ and $P_8$ are present, they are necessarily located 
on one of the critical lines $\mathcal L_\pm$. By symmetry, the same applies to $P_7$ and $P_9$.

We proceed to determine when the critical points $P_6$ and $P_8$ are present. This 
amounts to deciding when $V_\pm$ are real, i.e., when the
radicand in \eq{V_pm} is non-negative. To do so we consider two situations: either $\kappa$ 
is a free parameter, or $\kappa=\hat\kappa$ is given in terms of $\lambda$ and $\gamma$
by \eq{isentr_kappa}. We start with the general case where $\kappa$ is 
independent of $\lambda$, $\gamma$, and $n$.

%%%%%%%%%%%%%%%
%%%%%%%%%%%%%%%
\subsubsection{General case: $\kappa$ free.}
%%%%%%%%%%%%%%%
%%%%%%%%%%%%%%%
Consider the radicand in \eq{V_pm} as a polynomial
in $\mu=\lambda-1$; to organize the analysis 
we consider four sub-cases: 
\begin{enumerate}
	\item[(i)] For $\gamma=2$ the radicand in \eq{V_pm} is linear in $\mu$, with the single root
	corresponding to
	\beq\label{lam_pm_gamma=2}
		\lambda=\lambda_{\max}:=1+\textstyle\frac{(2m +\kappa)^2}{16 m}.
	\eeq
	In this case, $V_\pm$ are real if and only if $\lambda\leq \lambda_{\max}$. 
	The limiting case $\lambda=\lambda_{\max}$ yields $V_+=V_-=\frac{\kappa}{4m}-\frac 1 2$.
\end{enumerate}
Next, a direct calculation shows that for $\gamma\neq2$ the radicand in \eq{V_pm} has the roots
\beq\label{lam_pm}
	\lambda=1+\textstyle\frac{(m\gamma +\kappa)^2}{\left(\gamma\sqrt{m}\pm 
	\sqrt{2\gamma m-\kappa(\gamma-2)}\right)^2}.
\eeq
Here the $\pm$ signs are unrelated to those in \eq{V_pm}. The expressions in \eq{lam_pm} 
generalize the expressions recorded by Lazarus who treated the cases $\kappa=0$ and
$\kappa=\hat\kappa$ (Section 3 in \cite{laz}).
\begin{enumerate}
	\item[(ii)] When $\gamma\neq 2$ and the radicand $2\gamma m-\kappa(\gamma-2)$ 
	in \eq{lam_pm} is strictly negative, then the radicand in \eq{V_pm} has no real $\mu$-root. 
	Therefore, since the coefficient of $\mu^2$ in \eq{V_pm} is positive, the radicand in \eq{V_pm} is 
	then strictly positive. Consequently, $V_\pm$ are necessarily real and distinct numbers 
	in this case. 
\end{enumerate}
If $\gamma\neq 2$ and the radicand in \eq{lam_pm} satisfies $2\gamma m-\kappa(\gamma-2)\geq 0$, 
there are two further sub-cases depending on whether the minus-sign in \eq{lam_pm} gives a 
vanishing denominator:
\begin{enumerate}
	\item[(iii)] When $\gamma\neq 2$ and $\kappa=-\gamma m$ (in particular, the radicand 
	in \eq{lam_pm} is strictly positive, but the minus-sign gives a $\frac{0}{0}$ expression), 
	substitution of the latter $\kappa$-value directly into \eq{V_pm} gives 
	\beq\label{V_pm_iii}
		V_\pm=\textstyle\frac {1}{2 m \gamma}\Big[(\gamma-2)\mu-2m\gamma
		\pm\sqrt{(\gamma-2)^2\mu^2-4\gamma^2 m\mu}\Big].
	\eeq
	In this case $V_\pm$ are real numbers if and only if $\mu\leq 0$ or 
	$\mu\geq\frac{4m\gamma^2}{(\gamma-2)^2}$, i.e., if and only if 
	\beq\label{min_max_lambda_1}
		\lambda\leq\lambda_{\max}:=1\qquad\text{or}\qquad
		\lambda\geq\lambda_{\min}:=1+\textstyle\frac{4m\gamma^2}{(\gamma-2)^2}.
	\eeq
	We have that $V_+=V_-$ if and only if $\lambda$ takes one of the values 
	$\lambda_{\min}$ or $\lambda_{\max}$.
\end{enumerate}
%%%%%%%%%%%%%%%%
%\begin{remark}
%	Below we shall see that for the type of solutions under consideration, the 
%	requirement that the resulting Euler flow has locally bounded energy 
%	content implies that the parameters must satisfy $\lambda<1+\frac{\kappa+n}{2}$.
%	It is easily verified that $\lambda_{\min}$ in \eq {min_max_lambda_1} violates this 
%	requirement, so that only ... IS THIS REALLY RELEVANT?
%\end{remark}
%%%%%%%%%%%%%%%%
\begin{enumerate}
	\item[(iv)] Finally, consider the case when $\gamma\neq 2$, $\kappa\neq-\gamma m$, 
	and the radicand $2\gamma m-\kappa(\gamma-2)$ in \eq{lam_pm} is non-negative. We set
        \beq\label{lam_max}
        		\lambda_{\max}:=1+\textstyle\frac{(m\gamma +\kappa)^2}{\left(\gamma\sqrt{m}
        		+\sqrt{2\gamma m-\kappa(\gamma-2)}\right)^2},
        \eeq
        and
        \beq\label{lam_min}
    		\lambda_{\min}:=1+\textstyle\frac{(m\gamma +\kappa)^2}{\left(\gamma\sqrt{m}
        		-\sqrt{2\gamma m-\kappa(\gamma-2)}\right)^2},
        \eeq
        so that $V_\pm$ are real if and only if, either 
	\[\lambda\leq\lambda_{\max}\qquad\text{or}\qquad\lambda\geq\lambda_{\min}.\]
	Again, $V_+=V_-$ if and only if $\lambda$ takes one of the values 
	$\lambda_{\min}$ or $\lambda_{\max}$.
\end{enumerate}

%%%%%%%%%%%%%%%%%%%%%%%%%%%%%%%%%%%%
\subsubsection{The case $\kappa=0$}\label{lam_max_min_kappa=0}
%%%%%%%%%%%%%%%%%%%%%%%%%%%%%%%%%%%%
For later use we consider separately the case when $\kappa=0$. 
$V_\pm$ are then real provided
\[\lambda\leq\lambda_{\max}=1+\textstyle\frac{m\gamma}{\left(\sqrt{\gamma}+ \sqrt{2}\right)^2}
\qquad\text{or}\qquad
\lambda\geq\lambda_{\min}=1+\textstyle\frac{m\gamma}{\left(\sqrt{\gamma}- \sqrt{2}\right)^2}.\]
For the special value $\gamma=2$, we have $V_\pm$ real whenever
\[\lambda\leq\lambda_{\max}=1+\textstyle\frac{m}{4}.\]
Note that when $\kappa=0$, we necessarily have $\lambda_{\max}>1$.
%%%%%%%%%%%%%%%%%%%%%%%%%%%%%%%%%%%%

%%%%%%%%%%%%%%%%%%%
%	FIGURE 
%%%%%%%%%%%%%%%%%%%
\begin{figure}
	\centering
	\includegraphics[width=8cm,height=8cm]{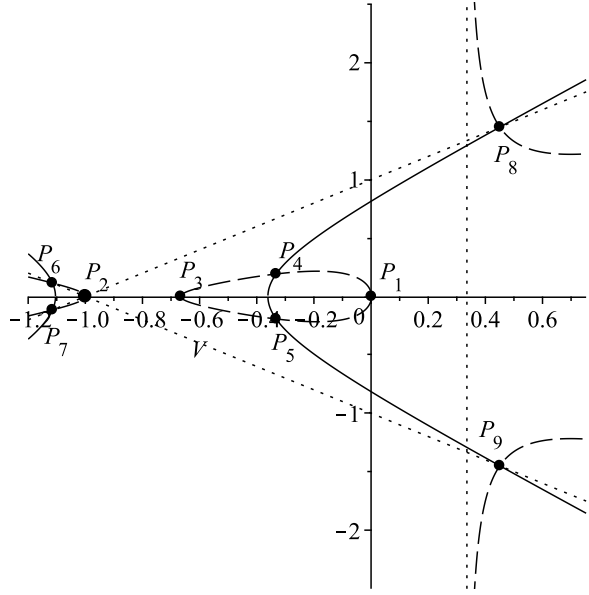}
	\caption{The zero-level curves  
	of $F(V,C)$ (solid, including the $V$-axis) and $G(V,C)$ (dashed),
	together with the critical lines $\mathcal L_\pm=\{C=\pm(1+V)\}$ and the vertical asymptote 
	$V=V_*$ (dotted). The parameters are $n=3$, $\gamma=\frac{5}{3}$,
	$\lambda=\frac{2}{3}$, and $\kappa=\hat\kappa=1$. All of the singular points
	$P_1$-$P_9$ are present in this case (solid dots).}\label{crit_points}
\end{figure}

%%%%%%%%%%%%%%%
%%%%%%%%%%%%%%%
\subsubsection{Isentropic case: $\kappa=\hat\kappa$.}\label{isntr_sing_pnts}
%%%%%%%%%%%%%%%
%%%%%%%%%%%%%%%
In this case $\kappa$ is fixed according to \eq{isentr_kappa} for 
given $\lambda$ and $\gamma$. In terms of $\mu=\lambda-1$ we have 
\[\hat\kappa=-\textstyle\frac{2\mu}{\gamma-1},\]
and substitution of this $\kappa$-value into \eq{V_pm} gives
\beq\label{isntr_V_pm}
	V_\pm=\textstyle\frac{1}{2}(a\pm\sqrt{Q}),
\eeq
where 
\beq\label{aQ}
	a=\textstyle\frac{(\gamma-3)}{m(\gamma-1)}\mu-1\qquad\text{and}\qquad
	Q=\left(\textstyle\frac{(\gamma-3)}{m(\gamma-1)}\right)^2\!\!\mu^2
	-2\textstyle\frac{(\gamma+1)}{m(\gamma-1)}\mu+1.
\eeq
To have $V_\pm$ real requires $Q\geq 0$. Regarding $Q$ as a polynomial in $\mu$
there are two cases:
\begin{enumerate}
	\item[(a)] When $\gamma=3$, $Q$ is linear in $\mu$ and $Q\geq 0$ if and only if
	$\mu\leq \frac{m}{4}$. In terms of $\lambda$ this means that $V_\pm$ are real 
	if and only if $\lambda\leq \lambda_{\max}:=1+ \frac{m}{4}$. Also, $V_-=V_+$
	if and only if $\lambda=\lambda_{\max}$.
	\item[(b)] For $\gamma\neq3$, $Q$ is a quadratic in $\mu$ with a positive leading
	coefficient. A direct calculation shows that $Q\geq0$ if and only if, either 
	$\lambda\leq \lambda_{\max}$ or $\lambda\geq\lambda_{\min}$, where 
	\beq\label{lambda_max}
		\lambda_{\max}=1+\textstyle\frac{m(\gamma-1)}{(\gamma+1)+\sqrt{8(\gamma-1)}}
	\eeq
	and 
	\beq\label{lambda_min}
		\lambda_{\min}=1+\textstyle\frac{m(\gamma-1)}{(\gamma+1)-\sqrt{8(\gamma-1)}}.
	\eeq
	Finally, $V_+=V_-$ if and only if $\lambda$ takes one of the values 
	$\lambda_{\min}$ or $\lambda_{\max}$.
\end{enumerate}
We note that, in either case (a) or case (b), $\lambda_{\max}>1$ holds due to our assumptions 
$n\geq 2$ and $\gamma>1$. In particular, when $\kappa=\hat\kappa$, $P_6$ and $P_8$ 
are present whenever $\lambda<1$.

Figure \ref{crit_points} displays a representative case with $\kappa=\hat\kappa$
and all critical points present.

%%%%%%%%%%%%%
%%%%%%%%%%%%%
\subsection{Critical points $P_{\pm\infty}$}\label{P_infs}
%%%%%%%%%%%%%
%%%%%%%%%%%%%
The critical points at infinity are $P_{\pm\infty}=(V_*,\pm\infty)$.
To analyze these we consider $P_{+\infty}$ (sufficient according to \eq{symms})
and change to the variables $W:=V-V_*$ and $Z:=C^{-2}$. Linearizing the 
resulting equation for $\frac{dZ}{dW}$ about $(W,Z)=(0,0)$ yields
\beq\label{WZ_ode}
	\frac{dZ}{dW}=-\frac{AZ}{nW-BZ},
\eeq
where 
\beq\label{ab}
	A=2(1+\textstyle\frac{\alpha}{1+V_*}),\qquad B=V_*(1+V_*)(\lambda+V_*).
\eeq
For later reference we note that $P_{+\infty}$ is a saddle point if and only if $A>0$.
The latter condition is satisfied when $\kappa=\hat\kappa$, since $\alpha$ 
then vanishes (see \eq{alpha} and \eq{isentr_kappa}).

%%%%%%%%%%%%%%%%%%%%%%%%%%%%%%%%
%%%%%%%%%%%%%%%%%%%%%%%%%%%%%%%%
\section{Absence of Guderley solutions when $0<\lambda<1$}\label{no_Guderley}
%%%%%%%%%%%%%%%%%%%%%%%%%%%%%%%%
%%%%%%%%%%%%%%%%%%%%%%%%%%%%%%%%
Recall from Section \ref{main_findings} that a {\em Guderley solution} 
refers to a radial self-similar solution of the form 
\eq{alt_sim_vars} to the Euler system 
\eq{m_eul}-\eq{ener_eul}, defined (at least)
for all negative times, and in which a 
converging shock wave approaches the origin by propagating 
into a quiescent fluid near the center of motion (i.e.,
the fluid is at rest and at constant pressure and density there). 

It is further assumed that the parameters $\kappa$ and $\lambda$
are the same inside and outside of the converging shock, and that the 
shock follows a path with $x=\frac{t}{r^\lambda}\equiv x_\s$, 
where $x_\s$ is a negative constant. As pointed out by Lazarus \cite{laz}, the 
constant density inside the converging shock implies that the
parameter $\kappa$ must be zero for a Guderley solution. 
In this work we assume $\lambda\neq 1$, and 
it follows from \eq{alt_sim_vars} that the sound speed $c$ must vanish within
the quiescent region $x<x_\s$. For the case of an ideal gas, this means that 
the temperature vanishes identically there.
(We stress that the density within the quiescent region does not 
vanish in a Guderley solution; the collapse of a spherical vacuum 
region is a different problem which also admits similarity solutions, 
\cites{bk,hun_63,laz}.) Therefore, for a Guderley solution, we have
$(V(x),C(x))\equiv (0,0)$ for $-\infty<x<x_\s$. 

To the best of our knowledge, starting with \cite{gud}, all works on
Guderley solutions assume that $\lambda\geq 1$. In order that the 
shock accelerates and collapses with infinite speed one must 
have $\lambda>1$. Among the many works on Guderley solutions 
(and collapsing cavities) we have found only a few that address 
the choice of range for $\lambda$. Among these, \cites{laz,hun_63}
simply choose to disregard cases where the shock
collapses with vanishing speed, while \cite{bk} (p.\ 16) claims that 
$\lambda<1$ ``is incompatible with any finite pressure before the wave.''
However, we do not see any {\em a priori} reason to exclude cases with
$0<\lambda<1$. If the Euler system admitted converging similarity shocks 
for this range, they would provide examples of ``glancing'' shocks that
weaken, slow down, and reach the origin with zero speed. 

However, based on numerical tests, we conjecture that the Euler system
(for an ideal gas in $2$ or $3$ space dimensions) does not admit 
Guderley solutions with this type of glancing similarity shock. 
In the rest of this section we briefly describe the analysis leading to 
this conclusion. Thus, for the remainder of this section, the assumptions 
\[0<\lambda<1\qquad\text{and}\qquad \kappa=0\]
are in force.
First, the Rankine-Hugoniot relations for a discontinuity propagating 
in a similarity solution along a curve $x\equiv x_\s$ in the $(r,t)$-plane are
\begin{align}
	1+V_+&=\textstyle\frac{\gamma-1}{\gamma+1}(1+V_-)+\frac{2C_-^2}{(\gamma+1)(1+V_-)}
	\label{rh1}\\
	C_+^2&=C_-^2+\textstyle\frac{\gamma-1}{2}[(1+V_-)^2-(1+V_+)^2]
	\label{rh2}\\
	R_+(1+V_+)&=R_-(1+V_-),
	\label{rh3}
\end{align}
where the subscripts $-$ and $+$ refer to states immediately prior 
to and after passing through the shock, respectively. Note that, in a Guderley 
solution the converging shock invades a quiescent state where $(V_-,C_-)=(0,0)$,  
and it follows from \eq{rh1}-\eq{rh2} that 
\beq\label{P_+}
	P_+=(V_+,C_+)
	=\big(-\textstyle\frac{2}{\gamma+1},\frac{\sqrt{2\gamma(\gamma-1)}}{\gamma+1}\big).
\eeq
Note that $P_+$ depends only on $\gamma$ and is located on the graph of 
the function $C_+(V):=\sqrt{(1+V)(2+V)}$ for $-1<V<0$.

Next, the entropy conditions for a converging similarity 1-shock defined for negatives times 
take the form
\beq\label{lax}
	C_-<1+V_-\qquad\text{and}\qquad C_+>1+V_+.
\eeq
(It may be shown from the entropy conditions that any converging self-similar 
shock with a quiescent inner state is necessarily a 1-shock when $\lambda>0$.) 
As is evident from \eq{P_+}, $P_+$ is located above the critical line $\mathcal L_+$, 
so that the 1-shock in a Guderley solution is entropy admissible.

To construct a Guderley solution (defined for all $t<0$) it is then necessary to find a solution 
$(V(x),C(x))$ of \eq{V_sim2}-\eq{C_sim2} which starts out from $P_+$ with $x=x_\s<0$,
and reaches the critical point $P_1$ at the origin with $x=0$. In particular, it must cross the critical 
line $\mathcal L_+$. Since the denominator $D(V,C)$ in \eq{V_sim2}-\eq{C_sim2} 
vanishes there, the only possibility is that the trajectory 
crosses at one of the critical points $P_6$ or $P_8$\footnote{See \eq{on_L_+}. 
There is an apparent 
third possibility in the exceptional case that $P_4$ happens to lie on $\mathcal L_+$.
However, this is not a separate case: it can be shown that if $P_4\in\mathcal L_+$, then 
$P_4$ necessarily coincides with either $P_6$ or $P_8$.}.
For fixed $n=2$ or $3$ and $\gamma>1$ it turns out that only certain values of $\lambda$
makes this happen.

%[WHY COULD'T THERE BE THE 1-SHOCK, FOLLOWED BY A SMOOTH FLOW, AND
%THEN A  3-SHOCK AS WELL (ALSO FOR $x<0$), AND *THEN*  HAVE IT FLOW TO THE ORIGIN, SAY? 
%WELL, THE 3-SHOCK WOULD BRING THE TRAJECTORY BELOW $\mathcal L_-$, BUT SINCE $x<0$,
%IT MUST JUMP TO A POINT IN THE UPPER HALF-PLANE (TO HAVE $c>0$); I CAN'T SEE HOW
%TO FLOW FROM THIS REGION TO CROSS INTO THE LOWER HALF-PLANE AT $x=0$.]

This non-linear eigenvalue problem for $\lambda$ was first addressed
by Guderley \cite{gud}, and later by several authors, see \cite{rkb_12}. As far as we know, the most 
comprehensive treatment (always for $\lambda\geq 1$) is due to Lazarus \cite{laz}, who 
also carried out detailed numerical 
calculations. One conclusion of these works is that, for $n=2$ or $3$ and 
$\gamma>1$, there is always at least one $\lambda$-value $\lambda>1$ for which the 
trajectory starting at $P_+$  passes through either $P_6$ or $P_8$,
and then proceeds to reach $P_1=(0,0)$. (In fact, depending on $\gamma$,
there can be whole intervals of allowed $\lambda$-values; also, once $P_6$
or $P_8$ has been reached from $P_+$, there may be infinitely many trajectories 
connecting to the origin; see \cite{laz}.) 

With this background we now turn to the possibility of generating a Guderley 
solution when $0<\lambda<1$. As $\kappa=0$ it follows from
Section \ref{lam_max_min_kappa=0} that $V_6=V_-$ and $V_8=V_+$ are 
real, so that the critical points $P_6,P_8$ 
are necessarily present. A direct calculation using \eq{V_4} and \eq{V_pm} shows
that 
\[V_6<-1<-\lambda<V_4<0<V_*<V_8\]
in this case. (We omit the details; similar computations are detailed in 
the proof Lemma \ref{V_locns} below.) It follows that $P_6$ is located to the left 
of the vertical line $V=-1$, and therefore belongs to $\mathcal L_-$, while $P_8$
is located to the right of the vertical asymptote $V=V_*$ of $\mathcal G$, and  
lies on $\mathcal L_+$. Also, in the case under consideration, $0<C_4<1+V_4$, 
so that $P_4$ is located strictly below $\mathcal L_+$. 
%(In fact, when $\kappa=0$ and
%$0<\lambda<1$, we have $C_4<\frac{1}{\sqrt{n}}(1+V_4)$.) 

Let $\Gamma_+$ denote the sought-for trajectory (i.e., staring at $P_+$
and ending at $P_1=(0,0)$). Since $x<0$ along $\Gamma_+$, and since its starting point $P_+$  
lies above $\mathcal L_+$, it follows from \eq{V_sim2}-\eq{C_sim2} 
that the trajectory moves in the direction of the vector field $(-G(V,C),-F(V,C))$ as $x$
increases from $x_\s<0$ toward $0$. It may be verified that $G(V,C)<0$ whenever
$(V,C)$ belongs to the region $\mathcal R:=\{-1<V<0\text{ and } C>1+V\}$. Since $\Gamma_+$ 
starts at $P_+\in\mathcal R$, it starts out moving to the right, and it follows that the only 
possibility for $\Gamma_+$ to reach $P_1$ is by crossing $\mathcal L_+$ at $P_8$.

An inspection of the ODE system  \eq{V_sim2}-\eq{C_sim2}  shows that, depending on the
value of $\gamma>1$, this could potentially happen in one 
of two ways: 
\begin{enumerate}
	\item[(A)] either $P_+$ is located near $P_2=(-1,0)$ and above $\mathcal F=\{(V,C)\,:\, F(V,C)=0\}$ (this happens for
	$\gamma$-values sufficiently close to $1$), and $\Gamma_+$ would start out by 
	moving up in a North-East direction, then cross $\mathcal F$ horizontally, before 
	moving down in a South-East direction toward $P_8$; or,
	\item[(B)] $P_+$ is located below $\mathcal F$, and $\Gamma_+$ would move 
	monotonically in a South-East direction toward $P_8$. (This could only occur for
	$\gamma$ sufficiently large, so that $P_+$ is located above $P_8$.)
\end{enumerate}
However, numerical tests with various choices for $\lambda\in(0,1)$ and $\gamma>1$
indicate that neither of these scenarios actually occurs. In all cases we have considered
the trajectory  $\Gamma_+$ hits $\mathcal L_+$ well to the left of $P_8$.
To have $\Gamma_+$ reach $P_8$ it appears advantageous to choose 
$\gamma\gg1$, so that $P_+\approx(0,\sqrt 2)$ is as close as possible to $P_8$. However, even with extreme
values for $\gamma$ (of order $10^6$, say), we have not been able to find cases where $\Gamma_+$ 
even crosses into the right half-plane (where $P_8$ is located) before hitting $\mathcal L_+$.

We therefore abandon the search for Guderley solutions when $\lambda\in(0,1)$, 
and instead turn to the construction of {\em shock-free} solutions for this parameter 
regime. To do so we first need to consider constraints imposed on the similarity
parameters $\lambda$, $\kappa$.

%%%%%%%%%%%%%%%%%%%%%%%%%%%%%%%%%%%%
%%%%%%%%%%%%%%%%%%%%%%%%%%%%%%%%%%%%
\section{Restrictions on $\lambda$ and $\kappa$}\label{lambda_restrict}\label{lam_kap_constrs}
%%%%%%%%%%%%%%%%%%%%%%%%%%%%%%%%%%%%
%%%%%%%%%%%%%%%%%%%%%%%%%%%%%%%%%%%%
In this section the similarity parameters $\lambda$ and $\kappa$ are 
at the outset free, while $\gamma>1$ is fixed and $n=2,3$. The goal is to obtain 
restrictions on $\lambda$ and $\kappa$ from physically relevant constraints
as described below. Some of the arguments in this section are similar to 
those in \cite{jt2}; for completeness we include the details.

%%%%%%%%%%%%%%%%%%%%%%%%%%%%%%%%%%
\subsection{Restrictions from integral bounds}\label{int_bounds}
%%%%%%%%%%%%%%%%%%%%%%%%%%%%%%%%%%
Referring to the discussion in Section \ref{P1_P3} we restrict attention to 
solutions $(V(x),C(x))$ of \eq{V_sim2}-\eq{C_sim2} which  
pass through the origin with \eq{well_bhvd} satisfied.
It follows from \eq{alt_sim_vars} that the flow variables at time of collapse are given by
\beq\label{at_collapse}
	\rho(0,r)=R(0)r^\kappa\qquad 
	u(0,r)=-\textstyle\frac{\nu}{\lambda}r^{1-\lambda}, \qquad 
	c(0,r)=-\frac{\mu}{\lambda}r^{1-\lambda}.
\eeq
As a minimal, physical requirement we insist that the resulting flow has locally 
finite mass, momentum, and total energy, i.e., for each ${\bar r}>0$, we have
\[\int_0^{\bar r}\rho(t,r)r^m\, dr,\quad
\int_0^{\bar r}\rho(t,r)|u(t,r)|r^m\, dr,\quad
\int_0^{\bar r}\rho(t,r)\left(e(t,r)+\textstyle\frac{1}{2}|u(t,r)|^2\right)r^m\, dr<\infty.
\]
Using \eq{at_collapse} it is straightforward to verify that,
at time $t=0$, these  integral bounds imply
\begin{itemize}
	\item[(I)] $\kappa+n>0$
	\item[(II)] $\lambda<1+\kappa+n$
	\item[(III)] $\lambda<1+\textstyle\frac{\kappa+n}{2}$,
\end{itemize}
respectively. Note that (II) is a consequence of (I) and (III).
For later reference we record the following consequence:
According to (III) and the standing assumption $\gamma>1$, we have
\beq\label{1+V*}
	0<\textstyle\frac{n+\kappa-2(\lambda-1)}{n\gamma}
	<\textstyle\frac{n\gamma+\kappa-2(\lambda-1)}{n\gamma}=1+V_*.
\eeq

%%%%%%%%%%%%%%%%%%%%%%%%%%%%%%%%%%
\subsection{Restrictions from pointwise bounds in a continuous flow}\label{cont_restrcn}
%%%%%%%%%%%%%%%%%%%%%%%%%%%%%%%%%%
The restrictions (I)-(III) above are now in force; in particular, \eq{1+V*} holds.
We then consider any solution $(V(x),C(x))$ of the similarity ODEs 
\eq{V_sim2}-\eq{C_sim2} which is defined for all $x<0$, and with the property 
that it defines a {\em continuous} Euler flow for all $t<0$.
As far as we are aware, the only way for this to occur
is by having the solution $(V(x),C(x))$ approach the critical point $P_{+\infty}$
in the upper half-plane:
\beq\label{behav}
	(V(x),C(x))\to P_{+\infty}=(V_*,+\infty)
	\qquad\text{as $x\downarrow -\infty$.}
\eeq
The latter property will hold, by construction, for the continuous solutions we analyze 
in Section \ref{cont_constrcn}, and \eq{behav} is assumed for the remainder of the 
present section. 

%%%%%%%%%%%%%%%%%%%%%%%%%%%%%%%%%
\begin{remark}\label{vac_regions}
	Strictly speaking, there may be another type of continuous 
	similarity flows with $0<\lambda<1$ violating \eq{behav}, viz.\ flows
	describing a spherical cavity (vacuum region) being filled by an inflowing gas. 
	In this work we restrict attention to flows without open vacuum regions
	(but see Remark \ref{one_point_vac}).
\end{remark}
%%%%%%%%%%%%%%%%%%%%%%%%%%%%%%%%%
By imposing continuity of the flow for $t<0$, we require
that the primary flow variables $\rho$, $u$, and $c$ are 
locally bounded at any fixed time strictly prior to collapse. In particular, 
$\rho(\bar t,r)$, $u(\bar t,r)$, and $c(\bar t,r)$ should remain bounded 
as $r\downarrow 0$ whenever $\bar t<0$. We proceed to analyze the 
implications of these requirements.
For $\bar t<0$ fixed we have 
\[u(\bar t,r)=-\textstyle\frac{r^{1-\lambda}}{\lambda}\frac{V(x)}{x}
=-\frac{1}{\lambda \bar t}V(x)r\propto V(x)r. \]
From \eq{behav} it follows that $u(\bar t,r)\sim r$ as $r\downarrow 0$. 
This shows that the speed of the fluid particles, at any time $\bar t<0$, approach zero at a 
linear rate as the center of motion is approached. Thus, no additional 
constraint is imposed on the similarity parameters $\lambda$ and $\kappa$ 
by requiring bounded (indeed, vanishing) 
fluid speed at the center of motion. 

Next, to analyze $c(\bar t,r)$ as $r\downarrow 0$, we need the leading 
order behavior of $C(x)$ as $x\downarrow -\infty$. Applying \eq{behav} 
in \eq{C_sim2} gives
\[\textstyle\frac{1}{C}\frac{d C}{dx}\sim \frac{1}{\lambda}
(1+\frac{\alpha}{1+V_*}) \frac{1}{x} \qquad\text{as $x\downarrow -\infty$,}\]
so that
\beq\label{sigma}
	C(x)\sim |x|^\sigma\qquad\text{as $x\downarrow -\infty$, where} \qquad
	\sigma=\textstyle\frac{1}{\lambda}(1+\frac{\alpha}{1+V_*}).
\eeq
%%%%%%%%%%%%%%%%%%%%%%%%%%%%%%%%%%
%\begin{remark}
%	Remark that Lazarus calculates a power series expansion 
%	and provides its leading coefficients - right? In particular,
%	we have, to leading order, 
%	\[C(x)\approx A|x|^\sigma,\]
%	where $A$ is a {\em non-zero} constant (given by ??).
%	Below we use this observation in Remark \ref{one_point_vac}
%	to argue that a vacuum occurs only at a single point in space-time 
%	for the solutions we consider.
%\end{remark}
%%%%%%%%%%%%%%%%%%%%%%%%%%%%%%%%%%
As $\bar t$ is fixed, we have $x\propto- r^{-\lambda}$ and \eq{alt_sim_vars} gives
\beq\label{c_bar}
	c(\bar t,r)\sim r^{1-\sigma\lambda}\qquad\text{as $r\downarrow 0$.}
\eeq
Boundedness of $c(\bar t,r)$ as $r\downarrow 0$ therefore imposes the constraint 
$1-\sigma\lambda\geq0$. According to \eq{sigma} and \eq{1+V*}, this 
amounts to $\alpha\leq 0$, or, according to \eq{alpha},
\beq\label{alfa}
	2(\lambda-1)+\kappa(\gamma-1)\leq 0.
\eeq
Next, to obtain the behavior of $\rho(\bar t,r)$ as $r\downarrow 0$, we use 
the exact integral \eq{entr_int} together with 
$V(x)\sim V_*$, $C(x)\sim |x|^\sigma$, and $x\propto r^{-\lambda}$, 
to get that
\beq\label{rho_bar}
	\rho(\bar t,r)\sim r^{\kappa+\frac{2\lambda(\sigma-1)}{1-\gamma+q}}
	\qquad\text{as $r\downarrow 0$,}
\eeq
where $q$ is given by \eq{q}. Boundedness of $\rho(\bar t,r)$ as $r\downarrow 0$
therefore requires
\beq\label{q_etc}
	\kappa+\textstyle\frac{2\lambda(\sigma-1)}{1-\gamma+q}\geq 0.
\eeq
We claim that \eq{q_etc}, together with requirement (I) in Section 
\ref{int_bounds}, \eq{1+V*}, and \eq{alfa}, 
imply that $\kappa$ must take the ``isentropic'' value $\hat\kappa$ given in \eq{isentr_kappa}. 
To see this, note that \eq{q}, (I), and \eq{alfa} (i.e., $\alpha\leq 0$) give $q\leq 0$. 
Therefore, the denominator in \eq{q_etc} satisfies $1-\gamma+q<0$, and 
\eq{q_etc} holds if and only if
\[\kappa(1-\gamma+q)+2\lambda(\sigma-1)\leq 0.\]
Using \eq{q} and \eq{sigma} to substitute for $q$ and $\sigma$, and 
rearranging, we obtain the equivalent condition 
\beq\label{almost_there}
	\textstyle\frac{\alpha}{1+V_*}\leq 
	\frac{n}{\kappa+n}[(\lambda-1)+\frac{\kappa}{2}(\gamma-1)]
	\equiv\frac{n\gamma \alpha}{\kappa+n}.
\eeq
Recall that boundedness of $c(\bar t,r)$ near $r=0$ requires 
\eq{alfa}, i.e., $\alpha\leq 0$. If $\alpha<0$ \eq{almost_there} simplifies to
\[\textstyle\frac{1}{1+V_*}\geq \frac{n\gamma}{\kappa+n},\]
which, according to \eq{1+V*}, (I), and \eq{V_*}, reduces to
\beq\label{almost_almost_there}
	n(\gamma-1)\leq 2(\lambda-1).
\eeq
However, $\alpha<0$ also gives $2(\lambda-1)<-\kappa(\gamma-1)$, 
so that \eq{almost_almost_there} yields $n(\gamma-1)<-\kappa(\gamma-1)$, 
or $n+\kappa<0$. This contradicts the integrability condition (I), and we 
conclude that $\alpha$ must vanish, i.e., we must have $\kappa=\hat\kappa$.
We observe that, with $\kappa=\hat\kappa$, \eq{rho_bar} and \eq{c_bar}
indeed provide bounded values for both $\rho(\bar t,r)$ and $c(\bar t,r)$ as $r\downarrow0$.
As detailed above (after \eq{isentr_kappa}), it follows that the resulting
flow in this case is globally isentropic.
We sum up our findings in the following proposition:
%%%%%%%%%%%%%%%%%%%%%%%%%%%
\begin{proposition}\label{isentr_cont_flow}
	Let $n=2$ or $3$ and fix $\gamma>1$
	and $\lambda>0$. Consider any solution $(V(x),C(x))$  
	of the similarity ODEs \eq{V_sim2}-\eq{C_sim2}, defined for $x<0$
	and satisfying \eq{behav}
	and \eq{well_bhvd} (with $\mu$ and $\nu$ finite and nonzero). Finally, 
	let $R\geq 0$ be given by \eq{entr_int}, and define the flow variables
	$\rho$, $u$, $c$ according to \eq{alt_sim_vars}.
	
	Then the requirements {\em (I)-(III)} in Section \ref{int_bounds}, together
	with boundedness of $\rho(t,r)$ and $c(t,r)$ as $r\downarrow0$ at fixed 
	times $t<0$, imply that the similarity parameter $\kappa$ in
	\eq{alt_sim_vars} must have the value $\hat\kappa$ given in \eq{isentr_kappa}.
	Finally, with $\kappa=\hat\kappa$ the resulting Euler flow (defined for $t<0$)
	is necessarily isentropic. 
\end{proposition}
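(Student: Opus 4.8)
The plan is to follow the chain of implications already sketched in Sections~\ref{int_bounds}--\ref{cont_restrcn}: each physical requirement on the flow at, and near, the center of motion produces one inequality among $\lambda$, $\kappa$, $\gamma$, $n$, and the goal is to show that this system of inequalities is consistent only when $\alpha=0$, which by \eqref{alpha} is precisely $\kappa=\hat\kappa$. First I would record that the integral bounds (I)--(III) together with $\gamma>1$ yield \eqref{1+V*}, so in particular $1+V_*>0$. Feeding the assumed limit \eqref{behav} into the $C$-equation \eqref{C_sim2} and integrating the resulting approximate relation $\tfrac1C\tfrac{dC}{dx}\sim\tfrac1{\lambda x}\bigl(1+\tfrac{\alpha}{1+V_*}\bigr)$ gives the leading-order asymptotics $C(x)\sim|x|^\sigma$ as $x\downarrow-\infty$ with $\sigma$ as in \eqref{sigma}, while $V(x)\to V_*$. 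At a fixed $\bar t<0$ one has $x\propto-r^{-\lambda}$, so \eqref{alt_sim_vars} turns these into $u(\bar t,r)\propto V(x)\,r\sim r$ and $c(\bar t,r)\sim r^{1-\sigma\lambda}$ as $r\downarrow0$; the velocity therefore vanishes linearly at the center and imposes no constraint, whereas boundedness of the sound speed forces $1-\sigma\lambda\ge0$, which by \eqref{1+V*} is equivalent to $\alpha\le0$, i.e.\ \eqref{alfa}.

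Next I would extract the analogous condition from the density. Substituting $V\sim V_*$, $C\sim|x|^\sigma$, and $x\propto r^{-\lambda}$ into the exact entropy integral \eqref{entr_int} produces $\rho(\bar t,r)\sim r^{\kappa+2\lambda(\sigma-1)/(1-\gamma+q)}$ with $q$ from \eqref{q}, so boundedness of $\rho$ at $r=0$ is exactly \eqref{q_etc}. Since (I) and $\alpha\le0$ force $q\le0$, the denominator $1-\gamma+q$ is strictly negative; clearing it reverses the inequality, and substituting the explicit formulas for $q$ and $\sigma$ collapses \eqref{q_etc} to the compact equivalent form \eqref{almost_there}, namely $\tfrac{\alpha}{1+V_*}\le\tfrac{n\gamma\alpha}{\kappa+n}$.

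The conclusion then comes from a short contradiction argument. If $\alpha\ne0$ then, by the first step, $\alpha<0$, so dividing \eqref{almost_there} through by $\alpha$ flips it to $\tfrac1{1+V_*}\ge\tfrac{n\gamma}{\kappa+n}$; using \eqref{1+V*}, (I), and the definition \eqref{V_*} of $V_*$ this reduces to $n(\gamma-1)\le2(\lambda-1)$. On the other hand $\alpha<0$ also says $2(\lambda-1)<-\kappa(\gamma-1)$, and combining the two gives $n(\gamma-1)<-\kappa(\gamma-1)$, i.e.\ $n+\kappa<0$, contradicting (I). Hence $\alpha=0$, i.e.\ $\kappa=\hat\kappa$. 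For the final assertion, $\kappa=\hat\kappa$ makes $\alpha=q=0$, so \eqref{entr_int} degenerates to \eqref{CR}; since the flow is continuous for all $t<0$ the constant there is the same throughout, and rewriting \eqref{CR} in terms of $\rho$ and $c^2\propto\theta$ exhibits $\theta\rho^{1-\gamma}$ as a global constant, which is the statement that the specific entropy is constant, so the flow is isentropic.

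I expect the only non-routine points to be bookkeeping ones: correctly tracking the sign of $1-\gamma+q$ (so that the inequality in \eqref{q_etc} flips the right way) and of $\alpha$ in the division of \eqref{almost_there}, and making sure the asymptotic equivalences coming from \eqref{behav} are actually legitimate, i.e.\ that the subleading terms in \eqref{C_sim2} and in the entropy integral are genuinely negligible as $x\downarrow-\infty$. Given the hypotheses (the limit \eqref{behav}, the finite nonzero limits in \eqref{well_bhvd}, and (I)--(III)), these are all straightforward, and the whole argument is essentially forced once the asymptotics are in hand.
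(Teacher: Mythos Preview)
Your proposal is correct and follows essentially the same argument as the paper: you derive the asymptotics $C(x)\sim|x|^\sigma$ from \eqref{behav} and \eqref{C_sim2}, translate boundedness of $c$ and $\rho$ at $r=0$ into the inequalities $\alpha\le0$ and \eqref{q_etc}, reduce the latter to \eqref{almost_there}, and then run the same contradiction against (I) to force $\alpha=0$. The steps, their order, and the final appeal to \eqref{CR} for isentropy all mirror the paper's proof.
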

%%%%%%%%%%%%%%%%%%%%%%%%%%%
From now on $\kappa=\hat\kappa$ is assumed. We note that the integrability 
conditions (I)-(III) in Section \ref{int_bounds} then reduce to the single 
requirement (III), which now reads
\beq\label{1st_lam_constr}
	\lambda<\bar\lambda(\gamma,n):=1+\textstyle\frac{n}{2}(1-\frac{1}{\gamma}).
\eeq
This is trivially satisfied if $0<\lambda<1$, a fact we make use of in Section 
\ref{cont_constrcn}.

%%%%%%%%%%%%%%%%%%%%%%%%%%%%%%%%%%%%
\subsection{Isentropic behavior near $r=0$}\label{r=0_behavior}
%%%%%%%%%%%%%%%%%%%%%%%%%%%%%%%%%%%%
The arguments above show that with $\kappa=\hat\kappa$, any continuous 
solution of \eq{V_sim2}-\eq{C_sim2} which is defined for all $x<0$ and
satisfies \eq{behav} and \eq{well_bhvd}, generates flow variables 
$u(\bar t,r)$, $c(\bar t,r)$, $\rho(\bar t,r)$ that approach finite values as 
$r\downarrow 0$ at each fixed $\bar t<0$.
We now verify that these finite values are approached with {\em bounded 
gradients}. In particular, no gradient catastrophe occurs in the flow prior 
to collapse at time $t=0$.

First, consider $u_r(\bar t,r)$; according to \eq{alt_sim_vars} and 
\eq{V_sim2} we have
\[u_r(\bar t,r)=-\textstyle\frac{1}{\lambda \bar t}\left(V(x)+\frac{G(V(x),C(x))}{D(V(x),C(x))}\right),
\qquad\text{where $x=\frac{\bar t}{r^\lambda}.$}\]
Recalling that $C(x)\uparrow+\infty$ and $V(x)\to V_*$ as $r\downarrow 0$, we get from 
\eq{G} that 
\[\textstyle\frac{G(V(x),C(x))}{D(V(x),C(x))}\to 0 \qquad\text{as $r\downarrow 0$.}\]
It follows from this that 
\[u_r(\bar t,r)\sim -\textstyle\frac{V_*}{\lambda \bar t}\qquad\text{as $r\downarrow 0$.}\]
Similarly, using \eq{alt_sim_vars} and \eq{C_sim2}, we have
\[c_r(\bar t,r)=\textstyle\frac{C(x)}{\lambda \bar t}
\left[\frac{(1-k_1)(1+V(x))^2+k_2(1+V(x))-k_3}{C(x)^2-(1+V(x))^2}\right],\]
and it follows from \eq{behav} that $c_r(\bar t,r)\sim 0$ as $r\downarrow 0$. In particular, to leading 
order, $c(\bar t,r)$ is constant as $r\downarrow 0$. Finally, since 
$\rho\propto c^\frac{2}{\gamma-1}$ in isentropic flow, the same applies to the density field.

We conclude that, in the isentropic setting under consideration, at any fixed 
time $\bar t<0$, all of $u_r(\bar t,r)$, $c_r(\bar t,r)$, $\rho_r(\bar t,r)$, and 
hence also $p_r(\bar t,r)$, remain bounded as $r\downarrow 0$. In particular,
no gradient catastrophe occurs at $r=0$ at strictly negative times. 

On the other hand, {\em at} time of collapse $t=0$, \eq{well_bhvd} and \eq{alt_sim_vars} 
give
\[\rho(0,r)=r^{\hat \kappa} R(0),\qquad u(0,r)=-\textstyle\frac{\nu}{\lambda}r^{1-\lambda},
\qquad c(0,r)=-\textstyle\frac{\omega}{\lambda}r^{1-\lambda}.\]
In particular, provided $\nu$ and $\omega$ are finite and nonzero, and $0<\lambda<1$,
we see that both the velocity and sound speed suffer a gradient catastrophe at the origin
at $t=0$. 

The same applies to the density field provided $\hat \kappa<1$, i.e.,
$\lambda>\frac{3-\gamma}{2}$. Note that the latter inequality is
satisfied whenever $\lambda>0$ and $\gamma>3$, as will be the case for 
the solutions we construct in Section \ref{cont_constrcn}.
On the other hand, the pressure field at time of collapse is, by \eq{sound_speed},
\[p(0,r)=\textstyle\frac{1}{\gamma}\rho(0,r)c^2(0,r)
\propto r^{\hat \kappa+2(1-\lambda)},\]
which suffers a gradient catastrophe at $r=0$ provided $\hat \kappa+2(1-\lambda)<1$,
or equivalently,
\beq\label{grad_catas_p}
	\lambda> \textstyle\frac{1}{2}(1+\frac{1}{\gamma}).
\eeq
As we shall see, \eq{grad_catas_p} will be violated for all solutions we 
construct below: their pressure fields are at least $C^1$-smooth at 
time of collapse.

\begin{remark}\label{one_point_vac}
	We note that, with $\lambda\in(0,1)$ and $\kappa:=\hat\kappa$ the density
	field at time of collapse satisfies $\rho(0,r)\propto r^{\hat\kappa}$, which vanishes at the 
	origin. The resulting Euler flow therefore has a {\em one-point vacuum}
	at the origin at time of collapse.
\end{remark}

%%%%%%%%%%%%%%%%%%%%%%%%%%
%%%%%%%%%%%%%%%%%%%%%%%%%%
\section{Construction of continuous flows with 
$0<\lambda<1$ and $\kappa=\hat\kappa$}\label{cont_constrcn}
%%%%%%%%%%%%%%%%%%%%%%%%%%
%%%%%%%%%%%%%%%%%%%%%%%%%%
We now turn to the construction of continuous, and in particular, locally bounded
radial Euler flows with similarity variable $\lambda\in(0,1)$. As explained in 
Section \ref{cont_restrcn}, we restrict attention to solutions satisfying \eq{behav}, 
and Proposition \ref{isentr_cont_flow} then shows that we must choose 
$\kappa=\hat\kappa$ in order to meet the physical constraints (I)-(III) in 
Section \ref{int_bounds}. Thus, for the remainder of the paper it is assumed that 
\beq\label{isntr_0<lam<1}
	0<\lambda<1\qquad\text{and}\qquad \kappa=\hat\kappa
	=\textstyle\frac{2(1-\lambda)}{\gamma-1}.
\eeq
%As noted after \eq{1st_lam_constr}, with \eq{isntr_0<lam<1} in force, 
%the integrability constraints (I)-(III) are all met. 
%Also, w
We observed at the end of Section \ref{isntr_sing_pnts} that $V_6=V_-$ 
and $V_8=V_+$ are both real under assumptions \eq{isntr_0<lam<1}, so that 
the critical points $P_1$-$P_9$ are all present.

%%%%%%%%%%%%%%%%%%%%%%%%%
\subsection{Outline of construction}\label{outline}
%%%%%%%%%%%%%%%%%%%%%%%%%
The continuous flows are built by identifying solution 
trajectories $\Gamma_1$-$\Gamma_3$ of \eq{V_sim2}-\eq{C_sim2} 
with the properties 
\begin{itemize}
	\item[($\Pi_1$)] $\Gamma_1$ connects $P_{+\infty}$ to $P_8$;
	\item[($\Pi_2$)]  $\Gamma_2$ connects $P_8$ to $P_9$ and 
	passes through $P_1=(0,0)$;
	\item[($\Pi_3$)]  $\Gamma_3$ connects $P_9$ to $P_{-\infty}$.
\end{itemize}
The symmetries recorded in \eq{symms} effectively reduce our task to 
identifying only $\Gamma_1$ and $\Gamma_2$: $\Gamma_3$
will simply be the the reflection of $\Gamma_1$ about the $V$-axis,
connecting $P_9$ to $P_{-\infty}$. 

Recall from Section \ref{P_infs} that $P_{\pm\infty}$ are saddle points.
In searching for a trajectory $\Gamma_1$ satisfying ($\Pi_1$) it is therefore
advantageous that $P_8$ be a nodal point into which a large family of 
trajectories are drawn. Indeed, a key part of the following analysis 
(Section \ref{P_8_analysis}) concerns the identification of a 
$(\lambda,\gamma)$-regime for which $P_8$ is a proper node. 

%It is convenient to identify suitable trajectories for the autonomous 
%ODE \eq{CV_ode} rather than the original ODE 
%system \eq{V_sim2}-\eq{C_sim2}. However, 
Care must be taken that $\Gamma_1$ reaches $P_8$ 
without crossing
the critical line $\mathcal L_+$. It turns out that
the latter requirement fixes the spatial dimension to be $n=3$
(see Section \ref{L_1_neg}). Likewise, the trajectory $\Gamma_2$
must connect to the origin without crossing $\mathcal L_+$; this 
however, will not impose any further constraints on the parameters.
 
Having identified a suitable $(\lambda,\gamma)$-regime (for $n=3$) 
we find it necessary to verify numerically that there are 
cases in which $\Gamma_1$ connects $P_{+\infty}$ to $P_8$ without 
first crossing $\mathcal L_+$. With $\Gamma_1$, and hence $\Gamma_3$,
thus determined, it remains to determine a suitable trajectory $\Gamma_2$
satisfying ($\Pi_2$). It is unproblematic to reach the origin $P_1$ from $P_8$: 
among the trajectories leaving the node at $P_8$, there are infinitely many
that connect to the origin. However, as the trajectory 
is continued through the origin, it should subsequently be drawn into $P_9$. 
It turns out that this last requirement determines a range of possible slopes 
with which $\Gamma_2$ can reach $P_1$;
see Section \ref{P_1_analysis}. Again, we verify numerically the existence of
trajectories $\Gamma_2$ meeting these constraints.

We note that it is necessary to make a final check on the selected trajectories 
$\Gamma_1$-$\Gamma_3$: they must provide admissible solutions trajectories 
for the original ODE system \eq{V_sim2}-\eq{C_sim2} (as opposed to the 
single ODE \eq{CV_ode}). As explained 
in Section \ref{P1_P3}, any solution $(V(x),C(x))$ of physical relevance
must necessarily pass through the origin $P_1$ with $x=0$, and from the upper 
half-plane to the lower half-plane as $x$ increases. Also, they must move along 
$\Pi_1$-$\Pi_2$-$\Pi_3$ in the correct direction given by \eq{V_sim2}-\eq{C_sim2} 
as $x$ increases from $-\infty$ to $+\infty$. 

Figure \ref{Figure_2} provides a representative case of the vector field 
$(-\frac{1}{\lambda x}\frac{G(V,C)}{D(V,C)},-\frac{1}{\lambda x}\frac{F(V,C)}{D(V,C)})$
corresponding to the ODE system \eq{V_sim2}-\eq{C_sim2}, with $x<0$ ($x>0$) in 
the upper (lower) half-plane. Notice that the arrows provide the actual 
direction of flow for solutions $(V(x),C(x))$ as $x$ increases.

%%%%%%%%%%%%%%%%%%%
%	FIGURE 
%%%%%%%%%%%%%%%%%%%
\begin{figure}
	\centering
	\includegraphics[width=8cm,height=8cm]{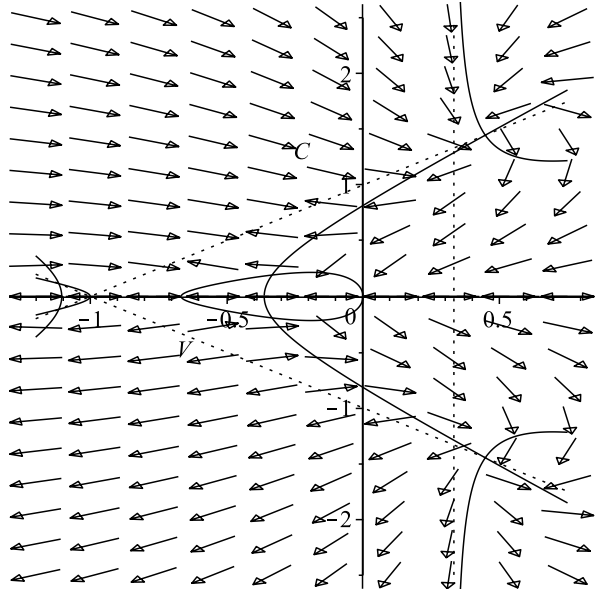}
	\caption{The direction field of \eq{V_sim2}-\eq{C_sim2}, with $x<0$ ($x>0$) in 
	the upper (lower) half-plane. The zero-levels of $F(V,C)$, $G(V,C)$, 
	the critical lines $\mathcal L_\pm=\{C=\pm(1+V)\}$,
	and the axes are solid curves; the vertical asymptote 
	$V=V_*$ is dotted. The parameters are as in Figure \ref{crit_points}: $n=3$, $\gamma=\frac{5}{3}$,
	$\lambda=\frac{2}{3}$, and $\kappa=\hat\kappa=1$. }\label{Figure_2}
\end{figure}

%%%%%%%%%%%%%%%%%%%%%%%%
\begin{remark}
	Figure \ref{Figure_2} illustrates the impossibility of having a solution of the 
	ODE system \eq{V_sim2}-\eq{C_sim2} cross the critical lines 
	$\mathcal L_\pm=\{C=\pm(1+V)\}$ at a 
	non-singular point: such trajectories of the autonomous ODE
	\eq{CV_ode} fail to yield relevant
	solutions to \eq{V_sim2}-\eq{C_sim2} since the vector field 
	corresponding to \eq{V_sim2}-\eq{C_sim2} points in opposite directions on 
	either side of $\mathcal L_\pm$. 
	
	Note that the parameters in Figure \ref{Figure_2} are chosen for illustrative
	purposes; in particular, the behavior near $P_8$ is such that property 
	{\em ($\Pi_1$)} fails in this case. It will be shown below that to 
	satisfy {\em ($\Pi_1$)}, we need to choose $n=3$, $\lambda$ sufficiently small,
	and $\gamma$ sufficiently large. It turns out that with such parameter values it is 
	necessary to zoom in at the critical points $P_8$, $P_1$, and $P_9$ in order to 
	display the behavior there; see Figures \ref{Zoom_Gamma_2} and \ref{Zoom_Hugoniot}.
\end{remark}
%%%%%%%%%%%%%%%%%%%%%%%%

%%%%%%%%%%%%%%%%%%%%%%%%%%
\subsection{Location of critical points}
%%%%%%%%%%%%%%%%%%%%%%%%%%
We start by determining the relative $V$-locations of the  
critical points under the assumptions in \eq{isntr_0<lam<1}. 
For convenience we repeat the expressions for 
$V_\pm$ (see \eq{isntr_V_pm} and \eq{aQ}) and $V_*$ in terms of 
$\mu=\lambda-1$:
\beq\label{isntr_0<lam<1_V_pm}
	V_\pm=\textstyle\frac{1}{2}(a\pm\sqrt{Q})\qquad\text{and}\qquad
	V_*=\frac{-2\mu}{n(\gamma-1)},
\eeq
where 
\beq\label{aQ_2}
	a=\textstyle\frac{(\gamma-3)}{m(\gamma-1)}\mu-1\qquad\qquad
	Q=\left(\textstyle\frac{(\gamma-3)}{m(\gamma-1)}\right)^2\!\!\mu^2
	-2\textstyle\frac{(\gamma+1)}{m(\gamma-1)}\mu+1.
\eeq
Recalling \eq{V_4} and introducing the positive constant
\beq\label{k}
	k:=\textstyle\frac{m}{2}[n(\gamma-1)+2],
\eeq
we have 
\beq\label{V_4_alt}
	V_4=-\textstyle\frac{m}{k}(1+\mu).
\eeq
For later reference we note that 
\beq\label{V8-V6}
V_8-V_6\equiv V_+-V_-=\sqrt{Q},
\eeq
and
\beq\label{V8-V4}
V_8-V_4=\textstyle\frac{1}{2}\left[ \left( \frac{(\gamma-3)}{m(\gamma-1)}+\frac{2m}{k} \right)\mu
+\left( \frac{2m}{k} -1\right)+\sqrt{Q}. \right]
\eeq
%The following lemma will be used in the construction of relevant, continuous 
%solutions of  \eq{CV_ode}.
%%%%%%%%%%%%%%%%%%%%%%%%%%%
\begin{lemma}\label{V_locns}
Assuming $n=2$ or $3$, $\gamma>1$, and that \eq{isntr_0<lam<1} holds, we have
\beq\label{V_ineqs}
	V_6<V_2=-1<V_3=-\lambda<V_4<0<V_*<V_8.
\eeq
\end{lemma}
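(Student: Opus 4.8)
The plan is to establish the chain \eqref{V_ineqs} by verifying each inequality separately, working from the explicit formulas \eqref{isntr_0<lam<1_V_pm}--\eqref{V8-V4}. Since $V_2=-1$, $V_3=-\lambda$ are known, and $0<\lambda<1$ gives $V_2<V_3<0$ immediately, the content is in the four inequalities $V_6<-1$, $-\lambda<V_4$, $V_4<0$, and $V_*<V_8$, together with $0<V_*$. Throughout I would keep in mind that $\mu=\lambda-1\in(-1,0)$ in the regime \eqref{isntr_0<lam<1}, and that by \eqref{1+V*} we already have $0<1+V_*$; combined with $V_*=\frac{-2\mu}{n(\gamma-1)}>0$ this handles the entry $0<V_*$ at once.

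First I would dispatch $V_4<0$ and $-\lambda<V_4$. From \eqref{V_4_alt}, $V_4=-\frac{m}{k}(1+\mu)=-\frac m k\lambda$, which is negative since $\lambda>0$; and $-\lambda<V_4$ is equivalent to $\frac m k<1$, i.e.\ $m<k=\frac m2[n(\gamma-1)+2]$, which reduces to $n(\gamma-1)>0$ — true. Next, $V_*<V_8$: using $V_8=\frac12(a+\sqrt Q)$ and the expression \eqref{isntr_0<lam<1_V_pm} for $V_*$, I would show $\sqrt Q>2V_*-a$. When the right side is negative this is automatic; otherwise I would square and reduce to a polynomial inequality in $\mu$ (linear or at worst quadratic, with controllable sign of the leading coefficient over $\mu\in(-1,0)$), using the standing assumptions $n\le 3$, $\gamma>1$ to pin down the signs. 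A cleaner route may be to note that $P_8$ lies on $\mathcal L_+$ by \eqref{on_L_+}, so $C_8^2=(1+V_8)^2$; feeding this back through \eqref{C^2_G} gives $n(1+V_8)^2(V_8-V_*)=V_8(1+V_8)(\lambda+V_8)$, from which the sign of $V_8-V_*$ is tied to the sign of $V_8(\lambda+V_8)/(n(1+V_8))$, and a short separate check that $V_8>0$ (again via the explicit root, or via $a+\sqrt Q>0$) closes it.

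Finally $V_6<-1$, equivalently $a+2<\sqrt{-Q}$... no: $V_6=\frac12(a-\sqrt Q)<-1$ means $\sqrt Q>a+2$. Since $a=\frac{(\gamma-3)}{m(\gamma-1)}\mu-1$, we have $a+2=1+\frac{(\gamma-3)}{m(\gamma-1)}\mu$; I would check whether this is positive or negative (it depends on the sign of $\gamma-3$ and the size of $\mu$), and in the negative case conclude immediately, while in the positive case square to get $Q>(a+2)^2$, which after cancellation becomes a linear inequality in $\mu$ — again resolved by the sign constraints from $n\le 3$, $\gamma>1$, $\mu\in(-1,0)$. Alternatively, since $P_6\in\mathcal L_-$ would follow from $V_6<-1$, I could again use \eqref{on_L_+} and \eqref{C^2_G}: $C_6^2=(1+V_6)^2$ forces $n(V_6-V_*)=V_6(\lambda+V_6)/(1+V_6)$, and analyzing this together with the already-established fact $V_*>0$ gives the location of $V_6$ relative to $-1$.

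The main obstacle I anticipate is purely bookkeeping: the squaring steps for $V_*<V_8$ and $V_6<-1$ produce polynomial inequalities in $\mu$ whose leading-coefficient sign and root locations must be tracked carefully, and one must be disciplined about the case split according to whether the quantity being squared (e.g.\ $2V_*-a$ or $a+2$) is already of definite sign — getting that split right is where an error would most likely creep in. The approach via the critical-line relation \eqref{on_L_+} plus \eqref{C^2_G} is attractive precisely because it trades these square-root manipulations for rational identities, so I would lead with it and fall back on the brute-force radicand estimates only where needed. (This mirrors the computation the authors allude to in Section~\ref{no_Guderley} for the $\kappa=0$ case.)
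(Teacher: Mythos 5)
Your plan is essentially the paper's proof: each inequality is checked in turn, with $V_4<0$, $-\lambda<V_4$, and $0<V_*$ handled exactly as you do, and the two square-root inequalities $V_6<-1$ and $V_*<V_8$ resolved by first verifying the quantity to be squared ($a+2$, resp.\ $2V_*-a$) is positive and then squaring — the resulting reductions do close, to $\gamma>1$ and to $n(\gamma-1)>-\mu(n(\gamma-1)-1)$ respectively. The only difference is that you leave that final algebra as routine bookkeeping rather than carrying it out.
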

%%%%%%%%%%%%%%%%%%%%%%%%%%%
\begin{proof}
	We consider each inequality in turn, from left to right:
        \begin{itemize}
		\item $V_6=V_-<-1$: According to \eq{isntr_0<lam<1_V_pm}${}_1$ this inequality 
		amounts to 
		\beq\label{rootq1}
            		a+2<\sqrt{Q}.
        		\eeq
		It is immediate to verify that $a+2>0$ if and only if $(\gamma-1)(\mu+m)>2\mu$,
		which holds since $m\geq1$, $-1<\mu<0$, and $\gamma>1$. It follows that \eq{rootq1}
		is equivalent to $(a+2)^2<Q$; substituting from \eq{aQ_2} shows that the latter inequality 
		reduces to $\gamma>1$, establishing the first inequality.
		\item $-1<-\lambda$: Immediate by \eq{isntr_0<lam<1}${}_1$.
		\item $-\lambda<V_4$: Substituting from \eq{k} and \eq{V_4_alt}, and recalling that $\mu+1=\lambda>0$,
		this inequality reduces to $n(\gamma-1)>0$, which holds since $\gamma>1$.
		\item $V_4<0$: Immediate from \eq{V_4_alt} since $m$, $k$ (see \eq{k}), and $\mu+1=\lambda$ are all positive.
		\item $0<V_*$: Immediate from \eq{isntr_0<lam<1_V_pm}${}_2$ since $\mu<0<\gamma-1$.
		\item $V_*<V_8=V_+$: By substituting from \eq{isntr_0<lam<1_V_pm} we obtain the equivalent 
		inequality
        		\beq\label{rootq2}
            		-\textstyle\frac{4\mu}{n(\gamma-1)}-a<\sqrt{Q}.
       		\eeq
		Using the expression for $a$ in \eq{aQ_2} and rearranging, we get that the left-hand side of \eq{rootq2}
		is positive provided $n(\gamma+1)-4>\frac{mn(\gamma-1)}{\mu}$, which holds since the left-hand side 
		in the latter inequality is positive (because $n\geq 2$ and $\gamma>1$), while the right-hand side is negative. 
		It follows that \eq{rootq2} is equivalent to
		\beq\label{rootq3}
            		\big(\textstyle\frac{4\mu}{n(\gamma-1)}+a\big)^2<Q.
       		\eeq
		Substituting from \eq{aQ_2} for $a$ and $Q$, and simplifying the result, we obtain that
		\eq{rootq3} is equivalent to $n(\gamma-1)>-\mu(n(\gamma-1)-1)$, which holds since $\gamma>1$
		and $-\mu=1-\lambda<1$.
	\end{itemize}
\end{proof}
%%%%%%%%%%%%%%%%%%%%%%%%%%%

%%%%%%%%%%%%%%%%%%%%%%%%%%
\subsection{The critical point $P_{+\infty}$}\label{Gamma_1}
%%%%%%%%%%%%%%%%%%%%%%%%%%
This analysis was done in Section \ref{P_infs}, and it was noted there 
that $P_{+\infty}=(V_*,+\infty)$ is necessarily a saddle point when $\kappa=\hat \kappa$. 
It follows that there is a unique trajectory $\Gamma_1$ of \eq{CV_ode} 
which approaches $P_{+\infty}$.
An inspection of $F(V,C)$ and $G(V,C)$ shows that the solutions of
\eq{CV_ode} have negative slopes within the the region
\beq\label{Omega}
	\Omega:=\{(V,C)\,|\, V_*<V<V_8,\, 1+V<C<\bar C(V)\}
\eeq
where $C=\bar C(V)$ denotes the $V$-parametrization of the zero-level
$\mathcal G$ of $G$. Furthermore, their slopes are finite along $V=V_*$
and infinite along $C=\bar C(V)$. It follows that $\Gamma_1$ is located 
within $\Omega$ and can reach its boundary only along $C=1+V$ for some $V\in(V_*,V_8)$,
or at $P_8$. In order to be useful for our purpose of building a globally defined
fluid flow, we must have that $\Gamma_1$ passes through $P_8$. We proceed to 
analyze the behavior of \eq{CV_ode} around $P_8$.

%%%%%%%%%%%%%%%%%%%%%%%%%%
\subsection{Behavior near $P_8$; construction of $\Gamma_1$}\label{P_8_analysis}
%%%%%%%%%%%%%%%%%%%%%%%%%%
In this subsection, unless indicated differently, 
all quantities are evaluated at $P_8=(V_8,C_8)$, 
and the subscript `$8$' is suppressed in most of the expressions. 
To determine the type of the critical point $P_8$ we shall need the signs of various
quantities given in terms of the partial derivatives of $F$ and $G$ there.
First, since $P_8\in \mathcal L_+\cap\mathcal F\cap\mathcal G$ we have
\begin{align}
	C&=1+V\label{C_V_8_1}\\
	C^2&=k_1(1+V)^2-k_2(1+V)+k_3\label{C_V_8_2}\\
	C^2&=\textstyle\frac{V(1+V)(\lambda+V)}{n(V-V_*)}.\label{C_V_8_3}
\end{align}
At $P_8$ we then have
\begin{align}
	F_C&=2C^2\label{F_C_8}\\
	F_V&=C(k_2-2k_1(1+V))\label{F_V_8}\\
	G_C&=2nC(V-V_*)\equiv 2V(\lambda+V)\label{G_C_8}\\
	G_V&=C(n-\lambda+nV_*-2V).\label{G_V_8}
\end{align}
Here, $F_C$ and $F_V$ are calculated from \eq{F} (using that $\alpha=0$ 
when $\kappa=\hat\kappa$), 
while $G_C$ and $G_V$ are calculated from \eq{G}, and using that
\[nC(V-V_*)=V(\lambda+V),\]
the latter being a consequence of \eq{C_V_8_1} and \eq{C_V_8_3}. 
We note that \eq{F_C_8}, \eq{G_C_8}, and \eq{V_ineqs} give
\beq\label{F_C_and_G_C_8}
	F_C>0\qquad\text{and}\qquad  G_C>0.
\eeq
Next, using the expressions above, we obtain that
\beq\label{G_V+G_C_8_1}
	G_V+G_C=C(2mV+m-\mu-nV_*),
\eeq
and substitution of the expressions in \eq{isntr_0<lam<1_V_pm} for $V=V_8=V_+$ and $V_*$
then yields
\beq\label{G_V+G_C_8_2}
	G_V+G_C=mC\sqrt{Q}>0.
\eeq
Applying \eq{non_obvious_reln}, we therefore obtain
\beq\label{F_V+F_C_8}
	F_V+F_C=-\textstyle\frac{(\gamma-1)}{2}(G_V+G_C)<0,
\eeq
so that
\beq\label{F_V_F_C}
	F_V<-F_C<0.
\eeq
Finally, we note that the expressions above give
\beq\label{F_C+G_V_8}
	F_C+G_V=C\left(n+1-\textstyle\frac{\gamma+1}{\gamma-1}\mu\right)>0.
\eeq
We next recall some notation and results from Lazarus \cite{laz}. 
The Wronskian is defined by
\[W:=F_CG_V-F_VG_C,\]
and the discriminant by
\beq\label{R_sqrd}
	R^2:=(F_C-G_V)^2+4F_VG_C\equiv (F_C+G_V)^2-4W.
\eeq
In the following, whenever $R^2>0$, we set $R:=+\sqrt{R^2}>0$.
Next, with
\beq\label{L_12}
	L_{1,2}=\textstyle\frac{1}{2G_C}(F_C-G_V\pm R)
\eeq
and
\beq\label{E_12}
	E_{1,2}=\textstyle\frac{1}{2G_C}(F_C+G_V\pm R),
\eeq
and signs chosen so that 
\beq\label{Es}
	|E_1|<|E_2|, 
\eeq
we have that 
integrals of \eq{CV_ode} near $P_8$ 
approach one of the curves
\[(c-L_1v)^{E_1}=\text{constant}\times (c-L_2v)^{E_2},\]
where $v=V-V_8$ and $c=C-C_8$. Note that the signs $\pm$ in \eq{L_12} and
in \eq{E_12} agree; $L_1$ and $L_2$ are referred to as
the {\em primary} and {\em secondary} slopes (or directions), respectively. 
Provided that $R^2>0$ (so that $R$ is real and positive) and $W>0$, $P_8$ 
is a proper node. In this case all solution curves approaching $P_8$ do so with slope equal 
to the primary slope $L_1$, except one which approaches $P_8$ with slope $L_2$. 

An elegant argument by Lazarus \cite{laz} shows that $W\equiv W_8$ is given as
\beq\label{W_8}
	W=2kC_8^2(V_8-V_4)(V_8-V_6),
\eeq
where $k>0$ is given in \eq{k}. It follows from Lemma \ref{V_locns} that 
\beq\label{W_8_sign}
	W>0.
\eeq

We now assume that $R^2>0$ (this requirement is addressed below in
Section \ref{R^2>0}) and proceed to determine the signs to be used
in \eq{L_12} so that \eq{Es} is satisfied. 
According to \eq{E_12}, \eq{Es} holds if and only if
\beq\label{primary_ineq}
	|F_C+G_V\pm R|<|F_C+G_V\mp R|. 
\eeq
From \eq{F_C+G_V_8} we have that $F_C+G_V>0$, and since $W>0$ we have 
\beq\label{R_F_C_G_V_8}
	0<R=\sqrt{(F_C+G_V)^2-4W}<F_C+G_V. 
\eeq
It follows that the minus-sign should 
be used on the left hand side of \eq{primary_ineq}, and the plus-sign should 
be used on the right hand side of \eq{primary_ineq}. That is, under the condition
that $R^2>0$, together with the standing assumption \eq{isntr_0<lam<1}, we have
\beq\label{prmr_scdr_slopes_8}
	L_1=\textstyle\frac{1}{2G_C}(F_C-G_V-R)
	\qquad\text{and}\qquad
	L_2=\textstyle\frac{1}{2G_C}(F_C-G_V+R).
\eeq
With this we have that $P_8$ is a proper node and that all but one of the integrals 
of \eq{CV_ode} approaching $P_8$ do so with slope $L_1$. 

We next want to 
determine how the primary slope $L_1$ compares to those of the curves $\mathcal G$ and 
$\mathcal L_+$ at $P_8$. Let $\bar C(V)$ be the $V$-parametrization of the zero-level 
curve $\mathcal G$ for $G(V,C)$, so that $\bar C'(V_8)=-\frac{G_V}{G_C}$. Together 
with \eq{prmr_scdr_slopes_8}${}_1$, and the fact that $G_C>0$ (by 
\eq{F_C_and_G_C_8}${}_2$), this implies that the inequality $L_1>\bar C'(V_8)$ 
is equivalent to $F_C+G_V>R$, which holds according to \eq{R_F_C_G_V_8}.
Therefore, near $P_8$, the straight line 
\[\mathcal L_1:\qquad C=C_8+L_1(V-V_8)\]
is located below $\mathcal G$ for $V<V_8$ and above $\mathcal G$ for $V>V_8$.

Before proceeding we also note the following: with $\tilde C(V)$ denoting the 
$V$-parametrization of the zero-level curve $\mathcal F$ for $F(V,C)$, 
we have $\tilde C'(V)=-\frac{F_V}{F_C}$, and it follows from \eq{F_C_and_G_C_8}${}_1$
and \eq{F_V+F_C_8} that $\tilde C'(V_8)>1$. Similarly,  using \eq{F_C_and_G_C_8}${}_2$
and \eq{G_V+G_C_8_2}, we have $\bar C'(V_8)<1$.

Finally, since $\frac{dC}{dV}=\frac{F}{G}<0$ within the region $\Omega$
given in \eq{Omega}, a {\em necessary} condition for having $\Gamma_1$ approach $P_8$
is that $L_1<0$. As $G_C>0$, \eq{prmr_scdr_slopes_8}${}_1$ shows that this
condition amounts to
\beq\label{L_1<0}
	F_C<R+G_V.
\eeq

Our goal now is to identify a parameter regime $(\lambda,\gamma)\in (0,1)\times (1,\infty)$
for which both of the two requirements $R^2>0$ and $L_1<0$ 
are satisfied. As demonstrated below in Lemma \ref{L_1_neg_lemma}, this requires $n=3$,
$0<\lambda<\frac{1}{9}$, and $\gamma$ sufficiently large. 
For such parameter values we then verify numerically that there are cases 
in which 
\begin{itemize}
	\item there are integrals of \eq{CV_ode} passing through the node $P_8$  
	along the primary direction $L_1$ and crossing the vertical line $V=V_*$; and 
	\item there are other integrals of \eq{CV_ode} passing through the node $P_8$  
	along the primary direction $L_1$ and crossing $\mathcal G$ vertically. 
\end{itemize}
It then follows by continuity that the unique integral $\Gamma_1$ of \eq{CV_ode} which approaches the 
critical point $P_{+\infty}=(V_*,+\infty)$, also passes through $P_8$
along the primary direction $L_1$.

%%%%%%%%%%%%%%%%%%%%%%%%%%%%%%%%%%%%%%
\subsubsection{The requirement $R^2(\lambda,\gamma)>0$.}\label{R^2>0}
%%%%%%%%%%%%%%%%%%%%%%%%%%%%%%%%%%%%%%
The discriminant $R^2$ is given in \eq{R_sqrd}; fixing $n=2$ or $3$, it is a 
function $R^2(\lambda,\gamma)$. According to \eq{F_C+G_V_8}, \eq{R_sqrd}, and \eq{W_8}, 
we have $R^2(\lambda,\gamma)>0$ if and only if 
\beq\label{R2>0}
	\left(n+1-\textstyle\frac{\gamma+1}{\gamma-1}\mu\right)^2>8k(V_8-V_4)(V_8-V_6).
\eeq
For each choice of $n=2$ or $3$ this inequality defines a certain region in the 
$(\lambda,\gamma)$-plane, which, according to our standing assumption
\eq{isntr_0<lam<1}${}_1$, is located within the half-strip $\{0<\lambda<1,\, \gamma>1\}$.
As the next lemma shows, the location of this region depends sensitively on $n$.
%%%%%%%%%%%%%%%%%%%%%%%
\begin{lemma}\label{R^2_pos}
	For $\lambda\in(0,1)$ fixed, we have
        \[\lim_{\gamma\uparrow+\infty}R^2(\lambda,\gamma)>0\qquad
        \Longleftrightarrow\qquad
        \left\{\begin{array}{ll}
        \lambda\in(0,\textstyle\frac{1}{9}) & \text{when $n=3$}\\\nn\\
        \lambda\in(\textstyle\frac{8}{9},1) & \text{when $n=2$.}\\
        \end{array}\right.\]
\end{lemma}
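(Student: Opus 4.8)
The plan is to compute the limit $\lim_{\gamma\uparrow\infty}R^2(\lambda,\gamma)$ in closed form and then read off its sign. Starting from \eq{R_sqrd}, \eq{F_C+G_V_8}, and \eq{W_8}, together with $C_8=1+V_8$, one has the factorization
\[R^2(\lambda,\gamma)=C_8^2\Big[\big(n+1-\tfrac{\gamma+1}{\gamma-1}\mu\big)^2-8k(V_8-V_4)(V_8-V_6)\Big],\]
which is exactly the content behind \eq{R2>0}. Hence it suffices to determine the $\gamma\to\infty$ limits of the three quantities $C_8=1+V_8$, the ``linear'' term $n+1-\tfrac{\gamma+1}{\gamma-1}\mu$, and the product $k\,(V_8-V_4)(V_8-V_6)$. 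Since $\tfrac{\gamma+1}{\gamma-1}\to1$, the linear term tends to $n+1-\mu=n+2-\lambda$.

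For the remaining limits I would use the elementary asymptotics of $a$ and $Q$ from \eq{aQ_2}. As $\gamma\to\infty$ both coefficients $\tfrac{\gamma-3}{m(\gamma-1)}$ and $\tfrac{\gamma+1}{m(\gamma-1)}$ tend to $\tfrac{1}{m}$, whence $a\to a_\infty:=\tfrac{\mu}{m}-1<0$ and $Q\to a_\infty^2>0$ (recall $Q>0$ throughout for $\lambda\in(0,1)$, by Section~\ref{isntr_sing_pnts}), so $\sqrt{Q}\to-a_\infty$, and therefore $V_8=\tfrac12(a+\sqrt{Q})\to0$, $V_6=\tfrac12(a-\sqrt{Q})\to a_\infty$, and $C_8=1+V_8\to1$. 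The delicate point is that $V_8\to0$ while $k\to\infty$, so $kV_8$ must be computed to leading order: rationalizing $a+\sqrt{Q}=\tfrac{Q-a^2}{\sqrt{Q}-a}$ and using the identity $Q-a^2=\tfrac{-8\mu}{m(\gamma-1)}$ (immediate from \eq{aQ_2}; equivalently $V_8V_6=\tfrac{2\mu}{m(\gamma-1)}$) gives $V_8\sim\tfrac{-2\mu}{(\gamma-1)(m-\mu)}$, and since $k\sim\tfrac{mn(\gamma-1)}{2}$ by \eq{k}, this yields $kV_8\to\tfrac{-mn\mu}{m-\mu}$. Meanwhile $kV_4=-m\lambda$ exactly, by \eq{V_4_alt}, so $k(V_8-V_4)=kV_8+m\lambda\to\tfrac{-mn\mu+m\lambda(m-\mu)}{m-\mu}$. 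Substituting $\mu=\lambda-1$ and $m=n-1$ collapses the numerator to $m(n-\lambda^2)$ and the denominator to $m-\mu=n-\lambda$; combining with $V_8-V_6=\sqrt{Q}\to-a_\infty=\tfrac{n-\lambda}{m}$ (see \eq{V8-V6}) then gives
\[\lim_{\gamma\to\infty}8k(V_8-V_4)(V_8-V_6)=8\cdot\frac{m(n-\lambda^2)}{n-\lambda}\cdot\frac{n-\lambda}{m}=8(n-\lambda^2).\]

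Putting everything together yields $\lim_{\gamma\to\infty}R^2(\lambda,\gamma)=(n+2-\lambda)^2-8(n-\lambda^2)=(n-2)^2-2(n+2)\lambda+9\lambda^2$. For $n=3$ this equals $(1-9\lambda)(1-\lambda)$, which is positive precisely when $\lambda<\tfrac19$ (since $\lambda\in(0,1)$); for $n=2$ it equals $\lambda(9\lambda-8)$, which is positive precisely when $\lambda>\tfrac89$. This is exactly the claimed equivalence. I expect the only genuine work to be the cancellation $k(V_8-V_4)=kV_8+m\lambda$ and verifying the simplification $-mn\mu+m\lambda(m-\mu)=m(n-\lambda^2)$ after substituting $\mu=\lambda-1$, $m=n-1$; all the remaining limits are routine.
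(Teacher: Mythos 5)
Your proof is correct and follows essentially the same route as the paper: both reduce to the sign of $(n+1-\mu)^2-8k(V_8-V_4)(V_8-V_6)$ as $\gamma\uparrow\infty$ and resolve the indeterminate product $kV_8$ by extracting the leading-order behavior $V_8\sim-\tfrac{2\mu}{(m-\mu)(\gamma-1)}$. The only cosmetic differences are that you obtain this via the rationalization $a+\sqrt{Q}=(Q-a^2)/(\sqrt{Q}-a)$ rather than a Taylor expansion of $\sqrt{Q}$, and that you package the answer as the closed-form limit $(n-2)^2-2(n+2)\lambda+9\lambda^2$, which factors into the same conditions the paper derives.
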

%%%%%%%%%%%%%%%%%%%%%%%
\begin{proof}
	Sending $\gamma\uparrow \infty$ in the expressions in \eq{aQ_2}, we get  
	\[a\to a_\infty:=\textstyle\frac{\mu}{m}-1<0,\qquad Q\to Q_\infty:=a_\infty^2.\]
	It follows from \eq{isntr_0<lam<1_V_pm} that $V_8=V_+\to \frac{1}{2}(a_\infty+|a_\infty|)=0$, 
	and $V_6=V_-\to \frac{1}{2}(a_\infty-|a_\infty|)=a_\infty$.
	Also, from \eq{k} and \eq{V_4} we have that $V_4\to0$, while $kV_4=-m(1+\mu)$. 
	As $\gamma\uparrow \infty$ the requirement
	 $R^2>0$ in \eq{R2>0} therefore reduces to the condition
	\beq\label{intermed1}
		(n+1-\mu)^2>8|a_\infty|\big[\lim_{\gamma\uparrow\infty} k(V_8-V_4)\big]
		=8(1-{\textstyle\frac{\mu}{m}})\big[m(1+\mu)+\lim_{\gamma\uparrow\infty} kV_8\big],
	\eeq
	where the last limit is of the form ``$\infty\cdot 0$.'' To analyze it we determine more precisely
	the distance between $Q$ and $Q_\infty$, and between $a$ and $a_\infty$, as 
	$\gamma\uparrow \infty$. Rewriting the expression \eq{aQ_2}${}_2$ for $Q$, we find that
	\[Q=(1-\textstyle\frac{\mu}{m})^2-\frac{4\mu}{m(\gamma-1)}
	\left[1+\frac{\mu}{m}\!\left(\frac{\gamma-2}{\gamma-1}\right)\right]
	\sim (1-\textstyle\frac{\mu}{m})^2-\frac{4\mu(m+\mu)}{m^2(\gamma-1)}
	\qquad\text{as $\gamma\uparrow\infty$.}\]
	To leading order in $\gamma$ we therefore have
	\[\sqrt Q\sim (1-\textstyle\frac{\mu}{m})-\frac{2\mu(m+\mu)}{m(m-\mu)}\frac{1}{(\gamma-1)}
	\qquad\text{as $\gamma\uparrow\infty$.}\]
	Combining this with 
	\[a=(\textstyle\frac{\mu}{m}-1)-\textstyle\frac{2\mu}{m}\frac{1}{(\gamma-1)},\]
	gives
	\[V_8=\textstyle\frac{1}{2}(a+\sqrt Q)\sim- \textstyle\frac{2\mu}{(m-\mu)}\frac{1}{(\gamma-1)}
	\qquad\text{as $\gamma\uparrow\infty$}.\]
	Recalling the expression \eq{k} for $k$ we obtain
	\[\lim_{\gamma\uparrow\infty} kV_8=-\textstyle\frac{mn\mu}{m-\mu}.\]
	Using this in \eq{intermed1} we conclude that, as $\gamma\uparrow\infty$, the requirement $R^2>0$ 
	reduces to the condition
	\beq\label{intermed2}
		(n+1-\mu)^2>8[(m-\mu)(1+\mu)-n\mu].
	\eeq
	Finally, with $n=3$ ($m=2$),  \eq{intermed2} becomes $8\mu+9\mu^2>0$, which reduces to 
	$\lambda=1+\mu<\frac 1 9$; with $n=2$ ($m=1$), \eq{intermed2} becomes $(1+\mu)(1+9\mu)>0$,
	which reduces to $\lambda>\frac 8 9$.
\end{proof}
%%%%%%%%%%%%%%%%%%%%%%%
Numerical plots of the curve in the $(\lambda,\gamma)$-plane defined by 
$R^2(\lambda,\gamma)=0$ reveal that it is the graph of an:
\begin{itemize}
	\item increasing function $\lambda\mapsto\gamma_3(\lambda)$ defined for 
	$\lambda\in (0,\frac{1}{9})$ and with a vertical asymptote at $\lambda=\frac{1}{9}$
	when $n=3$;
	\item decreasing function $\lambda\mapsto\gamma_2(\lambda)$ defined for 
	$\lambda\in(\frac{8}{9},1)$ and with a vertical asymptote at $\lambda=\frac{8}{9}$
	when $n=2$.
\end{itemize}
Figure \ref{gamma_3} shows the situation for $n=3$; the minimum value of 
$\gamma_3(\lambda)$ is $\gamma_*:=\gamma_3(0)\approx  8.72$. 
%(Alternatively, $\gamma_*$ is
%the unique root of the function $\gamma\mapsto R^2(\gamma,0)$, when $n=3$.) 

%%%%%%%%%%%%%%%%%%%
%	FIGURE 
%%%%%%%%%%%%%%%%%%%
\begin{figure}
	\centering
	\includegraphics[width=6cm,height=6cm]{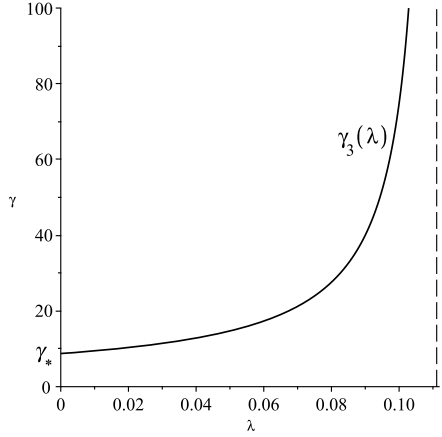}
	\caption{The graph of the function $\gamma_3(\lambda)$ which is defined 
	for $0<\lambda<\frac 1 9$;
	the discriminant $R^2(\lambda,\gamma)$ in \eq{R_sqrd}
	is positive when $\gamma>\gamma_3(\lambda)$; $\gamma_*=\gamma_3(0)\approx 8.72$. }\label{gamma_3}
\end{figure}

%%%%%%%%%%%%%%%%%%%%%%%%%%%%%%%%%%%%%%
\subsubsection{The requirement $L_1<0$.}\label{L_1_neg}
%%%%%%%%%%%%%%%%%%%%%%%%%%%%%%%%%%%%%%
For $n$ fixed we now consider $R^2$ and $L_1$ as functions of $\lambda$ and $\gamma$.
Recall that $L_1(\lambda,\gamma)<0$ is a necessary condition 
for having the trajectory $\Gamma_1$ connect $P_{+\infty}$
to $P_8$, and that we want $P_8$ to be a node, i.e., we need $R^2(\lambda,\gamma)>0$. 
The following lemma shows that only the case $n=3$ is favorable in this regard.
%%%%%%%%%%%%%%%%%%%%%%%
\begin{lemma}\label{L_1_neg_lemma}
	For $\lambda\in(0,1)$ and $\gamma>1$ we have
	\begin{enumerate}
		\item for $n=3$: if $R^2(\lambda,\gamma)>0$, then $L_1(\lambda,\gamma)<0$;
		\item for $n=2$: if $R^2(\lambda,\gamma)>0$, then $L_1(\lambda,\gamma)>0$.
	\end{enumerate}
\end{lemma}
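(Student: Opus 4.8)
The plan is to reduce the sign of $L_1$ to a single algebraic inequality in the evaluated partial derivatives at $P_8$, and then to leverage the already-established inequality $R^2(\lambda,\gamma)>0$ to control that expression. Recall from \eq{prmr_scdr_slopes_8} that $L_1 = \frac{1}{2G_C}(F_C - G_V - R)$ and that $G_C > 0$ by \eq{F_C_and_G_C_8}. Hence the sign of $L_1$ is the sign of $F_C - G_V - R$. Since $R = +\sqrt{R^2} > 0$ (using $R^2 > 0$), we have: if $F_C - G_V \le 0$ then automatically $L_1 < 0$; and if $F_C - G_V > 0$ then $F_C - G_V - R < 0 \iff (F_C - G_V)^2 < R^2 = (F_C - G_V)^2 + 4 F_V G_C \iff F_V G_C > 0$. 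Since $G_C > 0$, the latter is just $F_V > 0$. So the whole question collapses to: when $n=3$, show either $F_C \le G_V$ or $F_V > 0$; when $n=2$, show $F_C > G_V$ \emph{and} $F_V < 0$ (which gives $F_C - G_V - R > 0$, hence $L_1 > 0$). By \eq{F_V_F_C} we already know $F_V < -F_C < 0$ always — so in fact $F_V < 0$ regardless of $n$. Therefore the dichotomy simplifies further: $L_1 < 0 \iff F_C \le G_V$, and $L_1 > 0 \iff F_C > G_V$ (with $R^2 > 0$ in force throughout).

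So the entire lemma reduces to a single sign computation: \textbf{show that $G_V - F_C > 0$ at $P_8$ when $n=3$, and $G_V - F_C < 0$ at $P_8$ when $n=2$, under the hypothesis $R^2(\lambda,\gamma) > 0$.} Using \eq{F_C_8} and \eq{G_V_8}, together with $C = 1+V$ on $\mathcal L_+$ (\eq{C_V_8_1}) and the relation $C^2 = k_1(1+V)^2 - k_2(1+V) + k_3$ (\eq{C_V_8_2}), I would write $G_V - F_C$ purely in terms of $V = V_8$, $\mu$, $\gamma$, and $n$, eliminating $C$ via these constraints. The next step is to substitute the explicit value $V_8 = \frac{1}{2}(a + \sqrt{Q})$ from \eq{isntr_0<lam<1_V_pm}–\eq{aQ_2}. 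Because $\sqrt{Q}$ appears, I expect $G_V - F_C$ to take the form $(\text{linear in } \sqrt{Q}) = P(\mu,\gamma,n) + S(\mu,\gamma,n)\sqrt{Q}$; deciding its sign is then a standard "isolate the radical and square" maneuver, where one must track the sign of $P$ before squaring. The hypothesis $R^2 > 0$ — equivalently \eq{R2>0}, $(n+1-\frac{\gamma+1}{\gamma-1}\mu)^2 > 8k(V_8-V_4)(V_8-V_6)$ — should be exactly what is needed to pin down the ambiguous sign, since $R^2 > 0$ is the condition that $R$ is real, i.e., that the relevant quadratic factorization separates properly. In fact it may be cleaner to argue directly: $R^2 > 0$ together with $W > 0$ (established in \eq{W_8_sign}) gives $0 < R < F_C + G_V$ by \eq{R_F_C_G_V_8}; combined with the target reduction, one sees $L_1$ and $L_2$ have the same sign (both equal to the sign of $F_C - G_V$, since $|F_C - G_V| $ versus $R$ is what matters and the product $L_1 L_2 = \frac{W}{G_C^2} > 0$). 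So an even shorter route: $L_1 L_2 = \frac{F_C G_V - F_V G_C}{G_C^2} = \frac{W}{G_C^2} > 0$ always, so $L_1, L_2$ share a sign, and $L_1 + L_2 = \frac{F_C - G_V}{G_C}$, so that common sign is $\operatorname{sgn}(F_C - G_V)$. Then I only need the sign of $F_C - G_V$ at $P_8$, dimension by dimension.

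Carrying this out, the key computation is $F_C - G_V = 2C^2 - C(n - \lambda + nV_* - 2V)$ at $P_8$; dividing by $C = 1+V > 0$ (positive by Lemma \ref{V_locns}, since $V_8 > V_* > 0$ gives $1 + V_8 > 1 > 0$), this is $2C - (n-\lambda+nV_*-2V) = 2(1+V) - n + \lambda - nV_* + 2V = 4V + 2 - n + \lambda - nV_*$. Now $nV_* = \frac{\kappa - 2\mu}{\gamma} = -\frac{2\mu}{\gamma} + \frac{\hat\kappa}{\gamma}$, and with $\hat\kappa = -\frac{2\mu}{\gamma-1}$ one gets $nV_* = -\frac{2\mu}{\gamma}\big(1 + \frac{1}{\gamma-1}\big) = -\frac{2\mu}{\gamma-1}$, i.e. $nV_* = \hat\kappa = V_*  n$ — consistently $V_* = -\frac{2\mu}{n(\gamma-1)}$. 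So $F_C - G_V = (1+V)\big(4V + 2 - n + \lambda + \tfrac{2\mu}{\gamma-1}\big) = (1+V)\big(4V_8 + 1 - m + \mu + \tfrac{2\mu}{\gamma-1}\big)$ using $\lambda = 1+\mu$ and $m = n-1$. Substituting $4V_8 = 2(a + \sqrt{Q})$ with $a = \frac{(\gamma-3)}{m(\gamma-1)}\mu - 1$ yields $F_C - G_V = (1+V_8)\big(2\sqrt{Q} + 2a + 1 - m + \mu + \tfrac{2\mu}{\gamma-1}\big)$. The bracket is $2\sqrt{Q} + \big(\tfrac{2(\gamma-3)\mu}{m(\gamma-1)} - 2\big) + 1 - m + \mu + \tfrac{2\mu}{\gamma-1} = 2\sqrt{Q} + \mu\big(\tfrac{2(\gamma-3)}{m(\gamma-1)} + 1 + \tfrac{2}{\gamma-1}\big) + (1 - m - 2) = 2\sqrt{Q} + \mu\big(\tfrac{2(\gamma-1)}{m(\gamma-1)} + 1\big) - (m+1) = 2\sqrt{Q} + \mu\big(\tfrac{2}{m} + 1\big) - (m+1)$. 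So finally the sign of $L_1$ is the sign of
\[
\Phi := 2\sqrt{Q} + \Big(\tfrac{2}{m} + 1\Big)\mu - (m+1),
\]
where $Q = \big(\tfrac{(\gamma-3)}{m(\gamma-1)}\big)^2\mu^2 - \tfrac{2(\gamma+1)}{m(\gamma-1)}\mu + 1$. The remaining work is to show $\Phi > 0$ when $n=3$ ($m=2$) and $\Phi < 0$ when $n=2$ ($m=1$), in both cases \emph{given} $R^2(\lambda,\gamma) > 0$. I would proceed by isolating the radical: $\Phi$ has the sign of $2\sqrt{Q} \gtrless (m+1) - (\tfrac{2}{m}+1)\mu =: T$. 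When $m=1$, $T = 2 - 3\mu > 0$ (since $\mu < 0$), so $\Phi < 0 \iff 4Q < T^2$; when $m=3$... rather $m=2$, $T = 3 - 2\mu > 0$, and one checks whether $4Q > T^2$ or $< T^2$. The inequality $4Q - T^2 \gtrless 0$ is a quadratic in $\mu$ whose sign one analyzes on $(-1,0)$, and this is precisely where the hypothesis $R^2 > 0$ — which via Lemma \ref{R^2_pos} forces $\lambda < \tfrac19$ (so $\mu < -\tfrac89$) when $n=3$, and $\lambda > \tfrac89$ (so $\mu > -\tfrac19$) when $n=2$, at least in the $\gamma \to \infty$ limit, and more precisely $\gamma > \gamma_n(\lambda)$ — cuts down the admissible $(\mu,\gamma)$ region enough to fix the sign.

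I expect the main obstacle to be the last step: verifying that $4Q - T^2$ has the right sign throughout the region $\{R^2(\lambda,\gamma) > 0\}$, rather than merely in the $\gamma \to \infty$ limit. The cleanest approach is probably to show $4Q - T^2$ can be written in a form that is manifestly sign-definite on the relevant $\mu$-range (e.g., as a product of linear factors in $\mu$ whose roots lie outside $(-1, -\tfrac89)$ resp. $(-\tfrac19, 0)$), possibly after using the $R^2 > 0$ constraint to restrict $\gamma$. Alternatively, one may relate $4Q - T^2$ directly to the quantity $(V_8 - V_4)$ or $(V_8 - V_6)$ appearing in \eq{W_8}, exploiting the factorization \eq{W_8} of the Wronskian; since $L_1 L_2 = W/G_C^2$ and $L_1 + L_2 = (F_C - G_V)/G_C$, knowing $W > 0$ already, the sign of $F_C - G_V$ might be extractable from a comparison of $V_8$ with an auxiliary root without re-deriving everything. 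If a slick factorization is unavailable, the fallback is a direct but careful case analysis treating $\gamma$ near $1$, intermediate, and large separately, using monotonicity of $Q$ in $\gamma$ for fixed $\mu$. Either way, once $\operatorname{sgn}(F_C - G_V)$ is settled dimension-by-dimension, parts (1) and (2) follow immediately from $L_1 = \frac{1}{2G_C}(F_C - G_V - R)$ together with $0 < R < |F_C - G_V|$ on the set where they disagree in sign — wait, more precisely: since $L_1, L_2$ share the sign of $F_C - G_V$ and $L_1$ is the one of smaller... of primary type, $L_1 < 0$ in $n=3$ and $L_1 > 0$ in $n=2$, which is exactly the claim.
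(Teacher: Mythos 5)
Your reduction is the right one and matches the paper's: since $G_C>0$, $F_V<0$ (by \eqref{F_C_and_G_C_8} and \eqref{F_V_F_C}) and $R>0$, one has $(F_C-G_V)^2-R^2=-4F_VG_C>0$, so $R<|F_C-G_V|$ and $\operatorname{sgn}L_1=\operatorname{sgn}(F_C-G_V)$. (Minor slip: $L_1L_2=-F_V/G_C$, not $W/G_C^2$; the roots of $G_CL^2+(G_V-F_C)L-F_V=0$ have product $-F_V/G_C$. The conclusion that $L_1,L_2$ share the sign of $F_C-G_V$ survives.) The problem is that you never actually settle the sign of $F_C-G_V$ — you explicitly defer it as ``the remaining work'' and list candidate strategies — and the algebra you set up for that step contains errors that would derail it. Concretely: $2-n+\lambda=2-m+\mu$, not $1-m+\mu$; and your simplification $\frac{2(\gamma-3)}{m(\gamma-1)}+\frac{2}{\gamma-1}=\frac{2}{m}$ requires $\gamma-3+m=\gamma-1$, i.e.\ it is valid only for $m=2$, so your formula $\Phi=2\sqrt Q+(\frac2m+1)\mu-(m+1)$ is wrong in both cases (for $n=3$ the correct bracket is $2\sqrt Q+2\mu-2$, for $n=2$ the coefficient of $\mu$ is $\gamma$-dependent). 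You also state the target backwards: since $\operatorname{sgn}L_1=\operatorname{sgn}\Phi$, you need $\Phi<0$ for $n=3$ and $\Phi>0$ for $n=2$, not the reverse.

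The missing step is where the actual content of the lemma lies, and it closes more easily than your outline suggests. For $n=3$, the inequality $F_C<G_V$ reduces (after the corrected algebra, equivalently $\sqrt Q<1-\mu$) to $4(\gamma-3)(\gamma-1)>\mu(3\gamma-5)(\gamma+1)$, which is trivially true for $\gamma>3$ since the right side is then negative; the hypothesis $R^2>0$ enters only through the fact that it forces $\gamma>\gamma_*\approx8.72>3$. For $n=2$ no squaring-and-case-analysis in $(\mu,\gamma)$ is needed at all: $F_C>G_V$ is equivalent to $2V_8+\frac\lambda2>V_*$, which follows immediately from $\lambda>0$ and $V_8>V_*$ (Lemma \ref{V_locns}), and then $F_C-G_V>R$ follows from $(F_C-G_V)^2>R^2\iff F_VG_C<0$. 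So the architecture of your argument is sound, but as written the proof is incomplete at its decisive point and the computations feeding into that point need correction before any of your proposed completion strategies could succeed.
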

%%%%%%%%%%%%%%%%%%%%%%%
\begin{proof}
	As detailed above, the requirement $L_1<0$ amounts to the inequality in \eq{L_1<0}.
	First, if $n=3$ and $R^2>0$ (so that $R>0$), then \eq{L_1<0} follows once we verify
	that $F_C<G_V$. A direct calculation shows that the latter inequality (for $n=3$)
	reduces to 
	\[4(\gamma-3)(\gamma-1)>\mu(3\gamma-5)(\gamma+1),\]
	which is trivially satisfied whenever $\gamma>3$ since $\mu=\lambda-1<0$. According to 
	the analysis above, $R^2>0$ implies $\gamma>\gamma_*>3$ when $n=3$, establishing 
	part (1) of the lemma.
	
	Next consider the case $n=2$. According to \eq{prmr_scdr_slopes_8} and 
	\eq{F_C_and_G_C_8}, the inequality $L_1>0$ amounts to $F_C-G_V>R$. 
	For $n=2$ it follows from \eq{F_C_8}, \eq{G_V_8}, and \eq{C_V_8_1},
	that $F_C>G_V$ if and only if
	\[2V_8+\textstyle\frac{\lambda}{2}>V_*,\]
	which is satisfied since $\lambda>0$ and $V_8>V_*$ (by Lemma \ref{V_locns}).
	If $R^2>0$, so that $R>0$, it follows that $F_C-G_V>R$ holds if and only if $(F_C-G_V)^2>R^2$.
	Substituting from \eq{R_sqrd} for $R^2$ shows that the latter inequality
	reduces to $F_VG_C<0$, which is satisfied according to \eq{F_C_and_G_C_8}${}_2$ and \eq{F_V_F_C}.
	We conclude that, for $n=2$, $L_1>0$ whenever $R^2>0$. 
\end{proof}
%%%%%%%%%%%%%%%%%%%%%%%
With this we have identified the relevant parameter regime in which to search for 
continuous similarity flows when $\lambda\in(0,1)$: we need to choose $n=3$,
and $(\lambda,\gamma)$ so that $R^2(\lambda,\gamma)>0$, i.e., 
$\lambda\in(0,\frac 1 9)$ and $\gamma>\gamma_3(\lambda)$. This guarantees
that $P_8$ is a node with a negative primary slope $L_1$. It remains to provide
examples in which the trajectory $\Gamma_1$ from $P_{+\infty}$ is drawn into $P_8$.
As noted above, a sufficient condition for this behavior is the existence of trajectories that enter
the region $\Omega$ (see \eq{Omega}) along its left edge at $V=V_*$, and from there 
continue on to reach $P_8$. The numerical verification of this condition is addressed 
in Section \ref{Gamma_num_verfcn}.

Before moving on to the behavior near $P_1$ we note the following 
consequence of the proof of Lemma \ref{L_1_neg_lemma}: 
when $n=3$ and $R^2(\lambda,\gamma)>0$, then 
$L_1(\lambda,\gamma)<L_2(\lambda,\gamma)<0$. Indeed, $L_1(\lambda,\gamma)
<L_2(\lambda,\gamma)$ holds by definition, while the inequality $L_2(\lambda,\gamma)<0$
amounts to $R<G_V-F_C$. The proof of Lemma \ref{L_1_neg_lemma} showed that 
when $n=3$ and $R^2(\lambda,\gamma)>0$, then $G_V-F_C>0$. Therefore,
$R<G_V-F_C$ holds provided $R^2<(G_V-F_C)^2$, which, according to 
\eq{R_sqrd}, amounts to $F_VG_C<0$. The latter inequality is satisfied according
to \eq{F_C_and_G_C_8}${}_2$ and \eq{F_V_F_C}.

%%%%%%%%%%%%%%%%%%%%%%%%%%%%%%%%%%%%%
\subsection{Behavior near $P_1$; construction of $\Gamma_2$}\label{P_1_analysis}
%%%%%%%%%%%%%%%%%%%%%%%%%%%%%%%%%%%%%
While there is (at most) a single trajectory $\Gamma_1$ joining $P_{+\infty}$ 
to $P_8$, there will be a continuum of trajectories joining $P_8$ to $P_9$
via the proper node $P_1$ at the origin.  
To identify these it is convenient to 
also classify the critical point $P_4$. Lazarus \cite{laz} shows that the 
the Wronskian there is given as 
\beq\label{W_4}
	W_4=2kC_4^2(V_4-V_6)(V_4-V_8),
\eeq
where $k$ is a positive constant (cf.\ \eq{W_8}). It follows from 
Lemma \ref{V_locns} that $W_4<0$ so that
$P_4$, and hence also $P_5$, are saddle points.
An inspection of the $(V,C)$-plane reveals the presence of three relevant separatrices
(see Figure \ref{Zoom_Gamma_2}):
\begin{itemize}
	\item $\Theta$ joining $P_8$ to $P_4$;
	\item $\Phi$ joining $P_4$ to $P_1$; and
	\item $\Psi$ joining $P_1$ to $P_5$.
\end{itemize}
Let $\zeta>0$ denote the slope of $\Psi$ at $P_1$; by symmetry, 
the slope of $\Phi$ at $P_1$ is then $-\zeta$.

The trajectories $\Gamma_2$ of interest to us (i.e., the ones leading to 
continuous Euler flows) are those that reach the origin $P_1$ 
from $P_8$, and then moves on to $P_9$ in the 
lower half-plane. 
%The remainder of the present subsection argues that there is an
%infinite collection of such trajectories $\Gamma_2$.

As is clear from Figure \ref{Zoom_Gamma_2}, in order for $\Gamma_2$ 
to reach the origin, it must be located 
to the right of the separatrix $\Theta$. Also, it follows 
from the analysis at the end of Section \ref{L_1_neg} that all trajectories leaving 
$P_8$ do so with a negative slope: either $L_1$ or $L_2$, where $L_1<L_2<0$. 
Let $\Gamma_s$ 
denote the unique one among these which leaves with slope $L_2$.\footnote{If we 
use the trajectory $\Gamma_s$ the resulting Euler flow will suffer a {\em weak} discontinuity 
(i.e., a discontinuity in the 
first derivatives of the flow variables) across a curve 
$r(t)=(\frac{t}{x_8})^\frac{1}{\lambda}$ in the $(r,t)$-plane, where $x_8<0$ 
is such that $(V(x_8),C(x_8))=P_8$. This curve is 
a 1-characteristic for the radial Euler system \eq{m_eul}-\eq{ener_eul}.} 
Now, all trajectories leaving $P_8$ (with $V(x)$ increasing) proceed to cross 
vertically that part of the zero-level $\mathcal G=\{G=0\}$ which is 
located in the first quadrant. Among these, some reach the origin
with negative slopes after having vertically crossed also that part of $\mathcal G$ located in 
the second quadrant within the half-strip $\{(V,C)\,: V_4<V<0,\, C>0\}$. In all cases we have considered, it is clear
from numerical tests that there are other trajectories from $P_8$ which reach $P_1$ 
with positive slopes; see Remark \ref{no_way} below. In the following discussion it is 
assumed that this is the case. Evidently, the smallest positive slope with which $P_1$
can be reached from $P_8$ is that with which $\Gamma_s$ approaches $P_1$; 
we denote the latter slope by $\epsilon>0$. 

We next observe that the trajectory $\Gamma_2$ is not allowed to change 
its slope as it passes through $P_1$.\footnote{This points out a difference between $P_1$ and the critical points 
$P_6$-$P_9$. As noted above, a change of slope as $(V(x),C(x))$ passes through
$P_8$, say, results in a weak discontinuity in the corresponding Euler flow. 
In contrast, a change in slope at $P_1$ would generate, via \eq{alt_sim_vars} and \eq{well_bhvd},
an un-physical jump discontinuity across $t=0$.} 
Recalling the symmetry \eq{symms} of the phase portrait,
and setting $\delta:=\max(\zeta,\epsilon)>0$, we have that any  
of the infinitely many trajectories from $P_8$ which arrives at $P_1$ 
with a slope $s\in(-\infty,-\delta)\cup(\delta,+\infty]$, continues 
into the lower half-plane and reaches $P_9$. Indeed, the part of its 
trajectory in the lower half-plane will simply be the reflection about the $V$-axis
of one of the other trajectories from $P_8$ to $P_1$, viz.\ the one arriving at 
$P_1$ with slope $-s$. (In the limiting case
that $\Gamma_2$ reaches $P_1$ vertically, the lower part of $\Gamma_2$ is simply
the reflection of its upper part about the $V$-axis.) Any one of these trajectories may serve as
$\Gamma_2$ in our construction of continuous Euler flows.
%%%%%%%%%%%%%%%%%%%%%%%
\begin{remark}\label{no_way}
	If $\Gamma_s$ reached $P_1$ with slope $\epsilon<0$, then 
	none of the trajectories reaching the origin $P_1$ from $P_8$ would reach
	$P_9$. Again, we have not observed this scenario in any of our 
	numerical tests. E.g., in the case displayed in Figures \ref{Gamma_2}
	and \ref{Zoom_Gamma_2} (with $n=3$, $\lambda=0.02$, $\gamma=12$),
	$\epsilon$ is positive but so small that $\Gamma_s$ is indistinguishable 
	from the $V$-axis near the origin.
\end{remark}
%%%%%%%%%%%%%%%%%%%%%%%

%%%%%%%%%%%%%%%%%%%%%%%
\subsection{Summary}\label{sum}
%%%%%%%%%%%%%%%%%%%%%%%
We briefly summarize our findings so far in this section. First, by imposing the conditions  
$W>0$ and $R^2>0$ at the critical point $P_8$, we guarantee that 
$P_8$ is a node. The former requirement is automatically met once 
\eq{isntr_0<lam<1} holds, while the second requirement puts an $n$-dependent 
constraint on $\lambda$ and $\gamma$. The further condition 
$L_1(\lambda,\gamma)<0$, which is necessary in order that the trajectory $\Gamma_1$
connects to $P_8$ without first crossing $\mathcal L_+$, implies that the space 
dimension $n$ must be $3$. With $n=3$, the requirement $R^2>0$ is met, provided
$(\lambda,\gamma)$ lies above a certain graph $\gamma_3(\lambda)$ defined for 
$0<\lambda<\textstyle\frac 1 9$. If this is the case, then $L_1(\lambda,\gamma)<0$ is 
automatically satisfied.

Next, for all cases we have investigated numerically (see Section \ref{Gamma_num_verfcn}), 
there is an infinite number of trajectories $\Gamma_2$ joining $P_8$ to $P_9$ and passing 
through the origin $P_1$. Care must be taken that the trajectory from $P_8$ arrives at 
$P_1$ with a slope $s$ for which another trajectory arrives at $P_1$ from $P_8$ with the 
slope $-s$. When this holds, $\Gamma_2$ consists of the former trajectory together with 
the reflection of the latter about the $V$-axis.

Finally, the trajectory $\Gamma_3$ joining $P_9$ to $P_{-\infty}$ 
is the reflection of $\Gamma_1$ about the $V$-axis.

To finish the argument for the existence of continuous, radial similarity Euler flows, 
as described in the Main Results, it remains to verify 
the following two points: 
\begin{enumerate}
	\item[(i)] with $n=3$, there {\em are} choices of 
	$\lambda\in(0,\textstyle\frac 1 9)$ and $\gamma>\gamma_3(\lambda)$ 
	such that the trajectory $\Gamma_1$ from $P_{+\infty}$ reaches $P_8$, and 
	\item[(ii)] for these choices of the parameters there
	are trajectories $\Gamma_2$ from $P_8$ reaching the origin 
	and with the following property: $\Gamma_2$ reaches the origin with slope $s$,
	where $|s|>\delta$ ($\delta$ defined as above), and  
	there is another trajectory from $P_8$ reaching the origin with slope $-s$.
\end{enumerate}
Below we describe the numerical verification of these points.

%%%%%%%%%%%%%%%%%%%%%%%%%%%%%
\subsection{Numerical verification of (i) and (ii)}
\label{Gamma_num_verfcn}
%%%%%%%%%%%%%%%%%%%%%%%%%%%%%
The numerical verification is carried out with Maple.
As explained in Sections \ref{Gamma_1} and \ref{P_8_analysis}, to verify (i) it suffices to show that 
there is at least one solution crossing into the region $\Omega$ (see \eq{Omega}) along $V=V_*$ 
and reaching $P_8$ without first crossing the critical line $\mathcal L_+$.
To numerically check this, it is convenient to switch to the variables $W:=V-V_*$
and $Z:=C^{-2}$ which were used in Section \ref{P_infs} to analyze the 
critical point $P_{+\infty}$. The latter point is then located at the origin of the $(W,Z)$-plane. 
The analysis in Section \ref{P_infs} shows that this is a saddle point and that
$\Gamma_1$ leaves $P_{+\infty}=(0,0)$ with slope $\frac{dZ}{dW}=\frac{n+2}{B}=\frac 5 B$, where $B$ is
given in \eq{ab}. A sufficient condition for (i) to hold 
is that there are trajectories through points on the positive $Z$-axis which reach $P_8$
without first crossing $\mathcal L_+$.

Figure \ref{W_Z_plane} displays the situation in the $(W,Z)$-plane for $n=3$, 
$\lambda=0.02$, and $\gamma=12$, and provides clear numerical evidence that this is indeed 
the case. (The approximation of $\Gamma_1$, the dash-dot curve in Figure \ref{W_Z_plane}, 
is obtained by starting very
close to the origin along the straight line $Z= \frac 5 B W$.)
%(note that the slope $\frac 5 B\approx 1000$ in the case under consideration).

%%%%%%%%%%%%%%%%%%%
%	FIGURE 
%%%%%%%%%%%%%%%%%%%
\begin{figure}
	\centering
	\includegraphics[width=6cm,height=6cm]{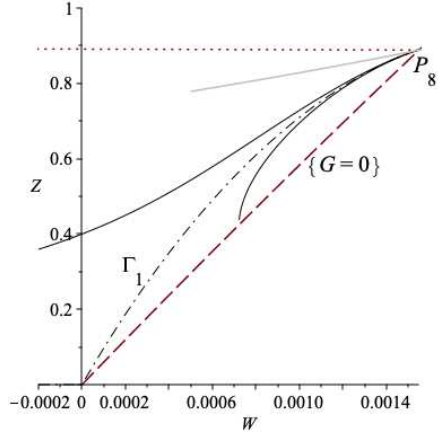}
	\caption{The dash-dot curve is an approximation of the trajectory $\Gamma_1$
	when plotted in the $(W,Z)$-plane, where $W=V-V_*$ and $Z=C^{-2}$. The dashed 
	curve is the zero-level $\mathcal G=\{G=0\}$ (crossed vertically by trajectories),
	and the dotted curve is the critical line $\mathcal L_+$. 
	The two solid curves are solutions: the upper one passes through the point $(W,Z)=(0,0.3)$
	while the lower one starts out near $\mathcal G$. Finally, the grey curve indicates the primary 
	slope with which the solutions reach $P_8$. The parameter values are $n=3$, $\lambda=0.02$, and
	$\gamma=12$. Evidently, $\Gamma_1$ reaches $P_8$ without first crossing 
	$\mathcal L_+$; this is confirmed by further numerical plots near $P_8$.}\label{W_Z_plane}
\end{figure}

%%%%%%%%%%%%%%%%%%%
%	FIGURE 
%%%%%%%%%%%%%%%%%%%
\begin{figure}
	\centering
	\includegraphics[width=7cm,height=7cm]{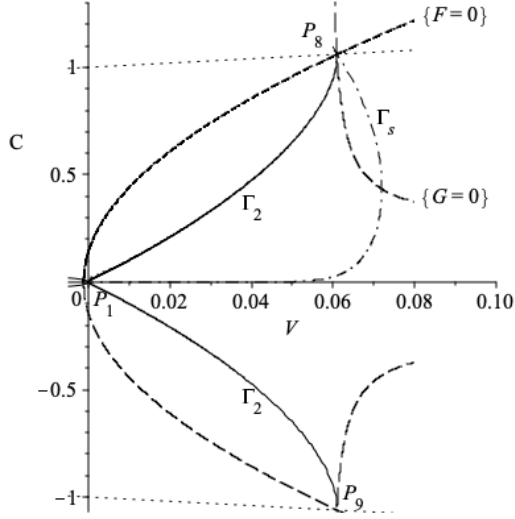}
	\caption{The dashed curves are the zero-levels $\{F=0\}$ and $\{G=0\}$, and 
	the dash-dot curve the $\Gamma_s$-trajectory which leaves $P_8$ 
	along the secondary direction.
	In addition there are two more solution trajectories: one is the separatrix $\Theta$ joining $P_4$ to $P_8$, 
	and the other is a complete $\Gamma_2$-trajectory (solid curve) joining $P_8$ and $P_9$ via the origin $P_1$.
	However, at this resolution the latter two trajectories are indistinguishable, and the $\Gamma_2$ trajectory 
	appears symmetric about the $V$-axis. (Figure \ref{Zoom_Gamma_2}
	displays a zoom-in near the origin which shows that this is not actually so.)
	The parameter values are as in Figure \ref{W_Z_plane}: 
	$n=3$, $\lambda=0.02$, and $\gamma=12$.}\label{Gamma_2}
\end{figure}

For numerical verification of (ii) we return to the $(V,C)$-plane and compute 
various trajectories joining the node at $P_8$ to the proper node (star point) $P_1$ at the origin. 
Figures \ref{Gamma_2}  and \ref{Zoom_Gamma_2} display the situation for the same parameter values as in 
Figure \ref{W_Z_plane}. Figure \ref{Gamma_2} shows the trajectory $\Gamma_s$ leaving $P_8$ along
the secondary direction and reaching the origin with a very small slope $\epsilon>0$. It also shows 
a complete $\Gamma_2$-trajectory, joining $P_8$ to $P_9$ via the origin, which appears to be 
symmetric about the $V$-axis. However, it is slightly 
un-symmetric and obscures the presence of the $P_4P_8$-separatrix $\Theta$;
see Figure \ref{Zoom_Gamma_2} for the detailed situation near the origin.

%%%%%%%%%%%%%%%%%%%
%	FIGURE 
%%%%%%%%%%%%%%%%%%%
\begin{figure}
	\centering
	\includegraphics[width=7cm,height=7cm]{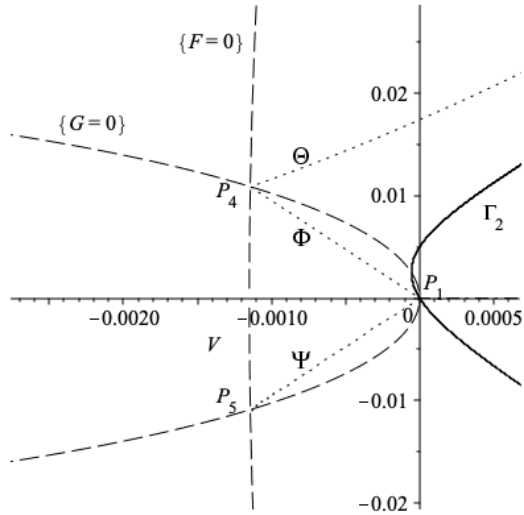}
	\caption{Zoom-in near the origin of Figure \ref{Gamma_2}. The 
	dashed curves are the zero-levels $\{F=0\}$ and $\{G=0\}$ the dotted curves
	are the separatrices $\Theta$, $\Phi$, and $\Psi$, and the solid curve is the 
	$\Gamma_2$-trajectory joining $P_8$ and $P_9$ via the origin $P_1$. Note that the 
	$\Gamma_s$-trajectory is indistinguishable from the $V$-axis 
	near the origin; thus $\epsilon\gtrsim0$ and $\delta=\zeta$ in this case.}\label{Zoom_Gamma_2}
\end{figure} 

%%%%%%%%%%%%%%%%%%
\subsection{The flow at collapse and absence of shocks}\label{no_shock_graph}
%%%%%%%%%%%%%%%%%%
Figures \ref{Gamma_2}-\ref{Zoom_Gamma_2} display a 
continuous trajectory $\Gamma_2$ joining the critical points $P_8$ and $P_9$
via the critical point $P_1$ at the origin. When this is joined with the trajectories 
$\Gamma_1$ and $\Gamma_3$ (the latter being the reflection of $\Gamma_1$ about 
the $V$-axis), we obtain a global, continuous solution $(V(x),C(x))$ 
of \eq{V_sim2}-\eq{C_sim2} joining $P_{+\infty}$ to $P_{-\infty}$ as $x$ varies from 
$-\infty$ to $+\infty$. Finally, from this we obtain, according to \eq{alt_sim_vars}, a 
globally defined, {\em continuous}, radial similarity Euler flow.

We now observe that whenever $\Gamma_2$ is a 
trajectory joining $P_8$ and $P_9$ via the origin, the same is true for its reflection $\Gamma_2'$ about the $V$-axis. 
Assuming $\Gamma_2$ reaches the origin with negative slope (as in Figures 
\ref{Gamma_2}-\ref{Zoom_Gamma_2}), the trajectory $\Gamma_2'$ 
will have positive slope at the origin. We therefore obtain two, physically distinct, 
Euler flows from these trajectories.
In particular, since both $\Gamma_2$ and $\Gamma_2'$ reach the origin as $x\uparrow0$, but 
with opposite signs of $V(x)$,
the  corresponding Euler flows display different behaviors at time of collapse:
in the one built from $\Gamma_2$ the fluid is moving toward  the origin at time of collapse, while
in the one built from $\Gamma_2'$ it moves outward. 

We find it noteworthy that the former flow remains continuous beyond collapse.
The flow variables $\rho(0,r)$, $u(0,r)$,  $c(0,r)$ all suffer gradient blowup at $r=0$ 
(see Section \ref{r=0_behavior}), and in addition the fluid flow is directed inward. 
It would be reasonable to expect that such data would generate 
an expanding shock wave for $t>0$. 

However, we can observe numerically that this is not what occurs for the flow corresponding to $\Gamma_2$.
%(If it were, it would give an example of non-unique propagation of 3-d Euler flows: one continuous 
%solution and one discontinuous solution taking the same initial data at time $t=0$.) 
In Figure \ref{Zoom_Hugoniot} we have plotted the trajectory $\Gamma_2$ from Figure \ref{Gamma_2},
together with the trajectory $\Gamma_3$, near $P_9$ (solid curve). We have also included the ``Hugoniot-locus'' 
$H$ of $\Gamma_2$ (dotted curve). This is the curve of points $(V_+,C_+)$ obtained from the 
Rankine-Hugoniot relations \eq{rh1}-\eq{rh2} as $(V_-,C_-)$ moves down from the origin along 
$\Gamma_2$. It may be deduced from the entropy condition that, for the solutions under 
consideration, a shock generated at collapse and propagating outward must necessarily be a 
3-shock which connects the outer state $(V_-,C_-)$, located above $\mathcal L_-$, to the 
inner state $(V_+,C_+)$ located below $\mathcal L_-$.
The presence of an expanding shock would then manifest itself by 
$H$ intersecting the trajectory $\Gamma_3$ at a point strictly 
below the critical line $\mathcal L_-$. However, Figure \ref{Zoom_Hugoniot} 
shows that $H$ (dotted curve) reaches $P_9$ without first
intersecting $\Gamma_3$: no shock is formed.

%%%%%%%%%%%%%%%%%%
%	FIGURE 
%%%%%%%%%%%%%%%%%%
\begin{figure}
	\centering
	\includegraphics[width=6cm,height=6cm]{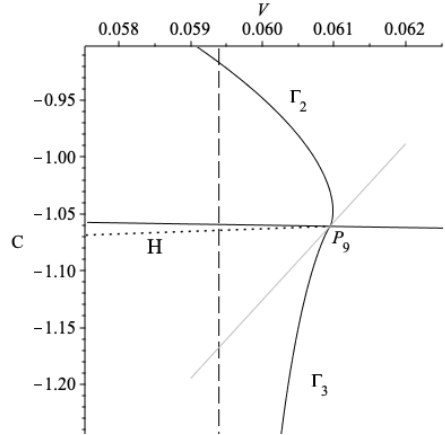}
	\caption{Zoom-in near the critical point $P_9$ with the same parameter values as 
	in Figures \ref{W_Z_plane}-\ref{Zoom_Gamma_2}: 
	$n=3$, $\lambda=0.02$, and $\gamma=12$. The solid straight 
	line is the critical line $\mathcal L_-=\{C=-1-V\}$, which appears almost horizontal. 
	The solid trajectory consists of parts of $\Gamma_2$ and $\Gamma_3$,
	located above and below $\mathcal L_-$, respectively; these meet at $P_9$
	along the primary direction (grey line). The dashed 
	vertical line is the asymptote $V=V_*$ approached by $\Gamma_3$. Finally, the 
	dotted curve is the Hugoniot locus $H$ consisting of states to which points along
	$\Gamma_2$ can jump. Note that $H$ is located below $\mathcal L_-$
	(as dictated by the entropy condition), but does not intersect $\Gamma_3$ 
	before reaching $P_9$. As a consequence, no shock wave occurs in the corresponding 
	Euler flow subsequent to collapse.} \label{Zoom_Hugoniot}
\end{figure}

\section*{Acknowledgements}
This material is based in part upon work supported by the National Science Foundation 
under Grant Number DMS-1813283 (Jenssen). Any opinions, findings, and conclusions 
or recommendations expressed in this material are those of the authors and do not 
necessarily reflect the views of the National Science Foundation.

The authors are grateful to Charis Tsikkou for help with the figures.

%BIBLIOGRAPHY
\begin{bibdiv}
\begin{biblist}
\bib{am}{book}{
   author={Atzeni, S.},
   author={Meyer-ter-Vehn, J.},  
   title={The Physics of Inertial Fusion},
   series={International Series of Monographs on Physics},
   volume={125},
   publisher={Oxford University Press, Oxford},
   date={2004},
}
\bib{biasi}{article}{
   author={Biasi, Anxo},
   title={Self-similar solutions to the compressible Euler equations and
   their instabilities},
   journal={Commun. Nonlinear Sci. Numer. Simul.},
   volume={103},
   date={2021},
   pages={Paper No. 106014, 28},
   issn={1007-5704},
   review={\MR{4312224}},
   doi={10.1016/j.cnsns.2021.106014},
}
\bib{bk}{article}{
   author={Bru\v{s}linski\u{\i}, K. V.},
   author={Ka\v{z}dan, Ja. M.},
   title={Self-similar solutions of certain problems in gas dynamics},
   language={Russian},
   journal={Uspehi Mat. Nauk},
   volume={18},
   date={1963},
   number={2 (110)},
   pages={3--23},
   issn={0042-1316},
   review={\MR{0172577}},
}
\bib{bi}{article}{
   author={Buckmaster, Tristan},
   author={Iyer, Sameer},
   title={Formation of unstable shocks for 2D isentropic compressible Euler},
   journal={Comm. Math. Phys.},
   volume={389},
   date={2022},
   number={1},
   pages={197--271},
   issn={0010-3616},
   review={\MR{4365141}},
   doi={10.1007/s00220-021-04271-z},
}
\bib{bsv_1}{article}{
   author={Buckmaster, Tristan},
   author={Shkoller, Steve},
   author={Vicol, Vlad},
   title={Shock formation and vorticity creation for 3d Euler},
   journal={arXiv:2006.14789},
   date={2020},
}
\bib{bdsv}{article}{
   author={Buckmaster, Tristan},
   author={Drivas, Theodore D.},
   author={Shkoller, Steve},
   author={Vicol, Vlad},
   title={Simultaneous development of shocks and cusps for 2D Euler with azimuthal symmetry from smooth data},
   journal={arXiv:2106.02143},
   date={2021},
}
\bib{christ_1}{book}{
   author={Christodoulou, Demetrios},
   title={The formation of shocks in 3-dimensional fluids},
   series={EMS Monographs in Mathematics},
   publisher={European Mathematical Society (EMS), Z\"{u}rich},
   date={2007},
   pages={viii+992},
   isbn={978-3-03719-031-9},
   review={\MR{2284927}},
   doi={10.4171/031},
}
\bib{christ_2}{book}{
   author={Christodoulou, Demetrios},
   title={The shock development problem},
   series={EMS Monographs in Mathematics},
   publisher={European Mathematical Society (EMS), Z\"{u}rich},
   date={2019},
   pages={ix+920},
   isbn={978-3-03719-192-7},
   review={\MR{3890062}},
   doi={10.4171/192},
}
\bib{cf}{book}{
   author={Courant, R.},
   author={Friedrichs, K. O.},
   title={Supersonic flow and shock waves},
   note={Reprinting of the 1948 original;
   Applied Mathematical Sciences, Vol. 21},
   publisher={Springer-Verlag},
   place={New York},
   date={1976},
   pages={xvi+464},
   review={\MR{0421279 (54 \#9284)}},
}
\bib{dm}{book}{
   author={Duderstadt, J.},
   author={Moses, G.},  
   title={Inertial Confinement Fusion},
   publisher={Wiley},
   date={1982},
}
\bib{gud}{article}{
   author={Guderley, G.},
   title={Starke kugelige und zylindrische Verdichtungsst\"{o}sse in der N\"{a}he
   des Kugelmittelpunktes bzw. der Zylinderachse},
   language={German},
   journal={Luftfahrtforschung},
   volume={19},
   date={1942},
   pages={302--311},
   review={\MR{0008522}},
}
\bib{haf}{article}{
   author={Hafner, Peter},
   title={Strong convergent shock waves near the center of convergence: a
   power series solution},
   journal={SIAM J. Appl. Math.},
   volume={48},
   date={1988},
   number={6},
   pages={1244--1261},
   issn={0036-1399},
   review={\MR{968828}},
   doi={10.1137/0148076},
}
\bib{hg}{article}{
   author={Hirschler, T.},
   author={Gretler, W.},
   title={On the eigenvalue problem of imploding shock waves},
   journal={Z. Angew. Math. Phys.},
   volume={52},
   date={2001},
   number={1},
   pages={151--166},
   issn={0044-2275},
   review={\MR{1818639}},
   doi={10.1007/PL00001537},
}
%\bib{hun_60}{article}{
%   author={Hunter, C.},
%   title={On the collapse of an empty cavity in water},
%   journal={J. Fluid Mech.},
%   volume={8},
%   date={1960},
%   pages={241--263},
%}
\bib{hun_63}{article}{
   author={Hunter, C.},
   title={Similarity solutions for the flow into a cavity},
   journal={J. Fluid Mech.},
   volume={15},
   date={1963},
   pages={289--305},
   issn={0022-1120},
   review={\MR{153224}},
   doi={10.1017/S0022112063000252},
}
\bib{jt1}{article}{
   author={Jenssen, Helge Kristian},
   author={Tsikkou, Charis},
   title={Amplitude blowup in radial isentropic Euler flow},
   journal={SIAM J. Appl. Math.},
   volume={80},
   date={2020},
   number={6},
   pages={2472--2495},
   issn={0036-1399},
   review={\MR{4181105}},
   doi={10.1137/20M1340241},
}
\bib{jt2}{article}{
   author={Jenssen, Helge Kristian},
   author={Tsikkou, Charis},
   title={Radially symmetric non-isentropic Euler lows: continuous blowup
	with positive pressure},
   journal={Submitted},
   date={2021},
}
\bib{laz}{article}{
   author={Lazarus, Roger B.},
   title={Self-similar solutions for converging shocks and collapsing
   cavities},
   journal={SIAM J. Numer. Anal.},
   volume={18},
   date={1981},
   number={2},
   pages={316--371},
}
\bib{ls_1}{article}{
   author={Luk, Jonathan},
   author={Speck, Jared},
   title={Shock formation in solutions to the 2D compressible Euler
   equations in the presence of non-zero vorticity},
   journal={Invent. Math.},
   volume={214},
   date={2018},
   number={1},
   pages={1--169},
   issn={0020-9910},
   review={\MR{3858399}},
   doi={10.1007/s00222-018-0799-8},
}
\bib{ls_2}{article}{
   author={Luk, Jonathan},
   author={Speck, Jared},
   title={The stability of simple plane-symmetric shock formation for 3D compressible Euler flow with vorticity and entropy},
   journal={arXiv:2107.03426},
   date={2021},
}\bib{mrrs1}{article}{
   author={Merle, Frank},
   author={Rapha\"{e}l, Pierre},
   author={Rodnianski, Igor},
   author={Szeftel, Jeremie},
   title={On smooth self similar solutions to the compressible Euler equations},
   journal={arXiv:1912.10998},
   date={2019},
}
\bib{mrrs2}{article}{
   author={Merle, Frank},
   author={Rapha\"{e}l, Pierre},
   author={Rodnianski, Igor},
   author={Szeftel, Jeremie},
   title={On the implosion of a three dimensional compressible fluid},
   journal={arXiv:1912.11009},
   date={2020},
}
\bib{p}{book}{
   author={Pfalzner, S.},
   title={An Introduction to Inertial Confinement Fusion},
   series={Series in Plasma Physics},
   publisher={CRC Press},
   date={2006},
}
\bib{mtv_s}{article}{
   author={Meyer-ter-Vehn, J.},
   author={Schalk, C.},
   title={Self-similar spherical compression waves in gas dynamics},
   journal={Z. Naturforsch. A},
   volume={37},
   date={1982},
   number={8},
   pages={955--969},
   issn={0340-4811},
   review={\MR{676290}},
}
\bib{rkb_12}{article}{
   author={Ramsey, Scott D.},
   author={Kamm, James R.},
   author={Bolstad, John H.},
   title={The Guderley problem revisited},
   journal={Int. J. Comput. Fluid Dyn.},
   volume={26},
   date={2012},
   number={2},
   pages={79--99},
   issn={1061-8562},
   review={\MR{2892836}},
   doi={10.1080/10618562.2011.647768},
}
\bib{sed}{book}{
   author={Sedov, L. I.},
   title={Similarity and dimensional methods in mechanics},
   note={Translated from the Russian by V. I. Kisin},
   publisher={``Mir'', Moscow},
   date={1982},
   pages={424},
   review={\MR{693457}},
}
\bib{stan}{book}{
   author={Stanyukovich, K. P.},
   title={Unsteady motion of continuous media},
   series={Translation edited by Maurice Holt; literal translation by J.
   George Adashko},
   publisher={Pergamon Press, New York-London-Oxford-Paris},
   date={1960},
   pages={xiii+745},
   review={\MR{0114423}},
}
\bib{zheng}{book}{
   author={Zheng, Yuxi},
   title={Systems of conservation laws},
   series={Progress in Nonlinear Differential Equations and their
   Applications, 38},
   note={Two-dimensional Riemann problems},
   publisher={Birkh\"auser Boston Inc.},
   place={Boston, MA},
   date={2001},
   pages={xvi+317},
   isbn={0-8176-4080-0},
   review={\MR{1839813 (2002e:35155)}},
}
\end{biblist}
\end{bibdiv}

\end{document}